\def\sw#1{{\sb{(#1)}}}
\def\id{{\rm id}}
\newtheorem{thm}{Theorem}[section]
\newtheorem{lem}[thm]{Lemma}
\newtheorem{cor}[thm]{Corollary}
\theoremstyle{definition}
\newtheorem{dfn}[thm]{Definition}
\theoremstyle{remark}
\newtheorem{rmk}[thm]{Remark}
\numberwithin{equation}{section}
\newcommand{\CC}{\mathbb{C}}
\newcommand{\NN}{\mathbb{N}}
\newcommand{\RR}{\mathbb{R}}
\newcommand{\TT}{\mathbb{T}}
\newcommand{\ZZ}{\mathbb{Z}}
\newcommand{\Ee}{\mathcal{E}}
\newcommand{\Kk}{\mathcal{K}}
\newcommand{\Tt}{\mathcal{T}}
\newcommand{\PTt}{\mathbb{P}^N(\Tt)}
\begin{document}
\parskip=0.75\baselineskip
\parindent=4mm

\author[P.M.~Hajac]{Piotr~M.~Hajac}
\address{Instytut Matematyczny, Polska Akademia Nauk, ul.~\'Sniadeckich 8, Warszawa, 00--656 Poland}
\email{pmh@impan.pl}
\author[R.~Nest]{Ryszard Nest}
\address{Department of Mathematics, Copenhagen University, Universitetsparken 5, 2100 Copenhagen, Denmark }
\email{rnest@math.ku.dk}
\author[D.~Pask]{David Pask}
\address{School of Mathematics and Applied Statistics, University of Wollongong, NSW 2522, Australia}
\email{dpask@uow.edu.au}
\author[A.~Sims]{Aidan Sims}
\address{School of Mathematics and Applied Statistics, University of Wollongong, NSW 2522, Australia}
\email{asims@uow.edu.au}
\author[B.~Zieli\'nski]{Bartosz~Zieli\'nski}
\address{
Department of  Computer Science, University of \L{}\'od\'z, Pomorska 149/153 90-236
\L{}\'od\'z, Poland} \email{bzielinski@uni.lodz.pl}
\title[Quantum odd spheres and complex projective spaces]{\vspace*{-25mm}
The $K$-theory of twisted  multipullback quantum\\ odd spheres and complex projective spaces}

\begin{abstract}\vspace*{-2.5mm}
We find multipullback quantum odd-dimensional spheres equip\-ped
with natural $U(1)$-actions that yield
 the multipullback quantum complex projective spaces constructed from Toeplitz cubes as noncommutative quotients.
We prove  that the noncommutative line bundles associated to multipullback
quantum odd spheres are pairwise stably \emph{non}-isomorphic, and that the \mbox{$K$-groups} of
multipullback quantum complex projective spaces and odd spheres coincide with their
classical counterparts. We show that these $K$-groups remain the same for more general
twisted versions of our quantum odd spheres and complex projective spaces.
\end{abstract}
\maketitle
\vspace*{-7mm} \setcounter{tocdepth}{1}
{\parskip=0.55\baselineskip\footnotesize\tableofcontents}

\vspace*{-20mm}\section*{Introduction}
\noindent
Complex projective space is a fundamental object in topology and algebraic geometry. It
also makes its mark in lattice theory as its affine covering provides a natural model of
a free distributive lattice~\cite{b-g67}. In \cite{hkz12},  a noncommutative deformation
of complex projective spaces preserving this lattice-theoretic property was introduced
and studied. The new quantum complex projective
space \mbox{$C^*$-algebras} $C(\mathbb{P}^N(\Tt))$ were defined as multipullback
$C^*$-algebras~\cite{p-gk99} rather than as fixed-point subalgebras~\cite{vs91,m-u95}.

In this paper, we    solve the  problem of constructing multipullback quantum-odd-sphere
$C^*$-algebras $C(S^{2N+1}_H)$ from which the $C^*$-algebras $C(\mathbb{P}^N(\Tt))$
emerge as fixed-point subalgebras for a natural circle action. Then we develop and utilise a
presentation of $C(S^{2N+1}_H)$ as the
universal $C^*$-algebra generated by $N+1$ commuting isometries satisfying a sphere
equation (see Theorem~\ref{prp:pullback isomorphism}).
We exploit this presentation to show that the $K$-goups of $C(S^{2N+1}_H)$ and of
$C(\mathbb{P}^N(\Tt))$ coincide with their classical counterparts.
\footnote{Keywords: free action on C*-algebras, associated noncommutative line bundle,
multipullback and higher-rank graph \mbox{C*-algebras}, noncommutative deformation. 
AMS codes: 46L80, 46L85.}

The constructions and results described above admit the following generalisation. For
each antisymmetric matrix $\theta \in M_{N+1}(\RR)$, we construct $\theta$-twisted
versions $C(S^{2N+1}_{H, \theta})$ and $C(\mathbb{P}^N_\theta(\Tt))$ of our quantum odd
sphere $C^*$-algebra and our quantum complex projective space $C^*$-algebra. The twisted
sphere algebra is universal for $N+1$ isometries commuting up to phases specified by the
matrix $\theta$ and satisfying a sphere equation. The twisted projective space
$C^*$-algebra is the fixed-point subalgebra of $C(S^{2N+1}_{H,\theta})$ for a natural
diagonal $U(1)$-action. We prove that $K$-theory of these algebras is independent of
$\theta$.

To state our main result, we recall some background. Given a $C^*$-algebra~$A$, we write
$C(U(1), A)$ for the $C^*$-algebra of norm-continuous functions from $U(1)$ to $A$. Each
action $\alpha$ of $U(1)$ on $A$ determines a homomorphism
\begin{equation}\label{deltaalpha}
\delta : A \longrightarrow C(U(1),A)\quad \text{by}\quad \delta(a)(\lambda) := \alpha_\lambda(a),\quad \ a \in A ,\; \lambda \in U(1).
\end{equation}
We say that $\alpha$ is \emph{free} if and only if
\begin{equation*}
\overline{\operatorname{span}}\{a\, \delta(b) \mid a,b \in A\} = C(U(1), A),
\end{equation*}
where $\overline{\operatorname{span}}$ stands for the closed linear span. The general
definition of freeness of a quantum-group action on a $C^*$-algebra is due to
Ellwood~\cite{Ellwood:JFA2000}, and the special case of any compact Hausdorff topological
group acting on a unital $C^*$-algebra looks exactly as above.

Given $\alpha : U(1) \curvearrowright A$ as above, for each character $m\in\widehat{U(1)}\cong\mathbb{Z}$, the
\emph{spectral subspace} $A_m$ is
\begin{equation*}
A_m := \{a \in A \mid \alpha_\lambda(a) = \lambda^m a\text{ for all }\lambda \in U(1)\}.
\end{equation*}
The subspace $A_0$ is the fixed-point subalgebra $A^\alpha$ (also denoted $A^{U(1)}$) of $A$, and since $A_m A_n
\subseteq A_{m+n}$ for all $m,n$, the spectral subspaces are always $A^\alpha$-bimodules.
When $\alpha$ is free, they are finitely generated projective left
$A^\alpha$-modules \cite[Theorem~1.2]{dy13} encoding associated noncommutative line
bundles.

By constructing a strong connection~\cite{h-pm96}, we prove that the action of $U(1)$ on $C(S^{2N+1}_H)$ is free,
so that its spectral subspaces
$C(S^{2N+1}_H)_n$ are finitely generated projective left
$C(\mathbb{P}^N(\Tt))$-modules.
To prove that the characters of $U(1)$ defining these noncommutative line
bundles are $K_0$-invariants, we derive a general method of
pulling back noncommutative associated line bundles over equivariant maps (Theorem~\ref{thm:epullback}).

The key results of this paper can be summarized as follows:
\begin{thm}\label{thm:main}
Fix an integer $N \ge 1$ and a matrix $\theta \in M_{N+1}(\RR)$ that is antisymmetric in
the sense that $\theta_{ij} = -\theta_{ji}$ for all $i,j$. Then:
\begin{enumerate}
    \item\label{it:main-sphere-Kth}
$K_0(C(S^{2N+1}_{H,\theta})) \cong \ZZ \cong K_1(C((S^{2N+1}_{H,\theta}))$.
    \item\label{it:main-PS-Kth} $K_0(C(\mathbb{P}^N_\theta(\Tt)))=\mathbb{Z}^{N+1}$
        and $K_1(C(\mathbb{P}^N_\theta(\Tt)))=0$.
    \item\label{it:main-stabnontriv} The spectral subspaces $C(S^{2N+1}_H)_m$,
        regarded as left $C(\mathbb{P}^N(\Tt))$-modules, are pairwise stably
        nonisomorphic. In particular, the module  $C(S^{2N+1}_H)_{-1}$ of sections of the
        tautological line bundle is not stably free.
\end{enumerate}
\end{thm}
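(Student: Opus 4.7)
The plan is to prove the three parts in sequence: part~(1) computes the sphere $K$-theory; part~(2) follows from~(1) via the $C^*$-freeness of the $U(1)$-action established earlier in the paper (or by a parallel Mayer--Vietoris on the projective space); and part~(3) detects distinct classes among the spectral subspaces by the equivariant-pullback technology of Theorem~\ref{thm:epullback}.

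For part~(1) in the untwisted case $\theta = 0$, I would use the multipullback presentation of Theorem~\ref{prp:pullback isomorphism}. Its building blocks are quotients of Toeplitz cubes $\Tt^{\otimes k}$, each with $K$-theory $(\ZZ, 0)$ by iterated K\"unneth and $K_*(\Tt) = K_*(\CC)$. Iterated application of the Mayer--Vietoris six-term exact sequence for $C^*$-algebraic pullbacks, by induction on $N$, then yields $K_0(C(S^{2N+1}_H)) = \ZZ = K_1(C(S^{2N+1}_H))$. For general antisymmetric $\theta$, the same pullback scheme carries over with $\theta$-twisted Toeplitz cubes in place of the ordinary ones; these are $KK$-equivalent to their untwisted counterparts, since the twist is a 2-cocycle that can be continuously deformed to the trivial one along a path of antisymmetric matrices, so Mayer--Vietoris again gives the same answer. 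Equivalently, one may assemble $\{C(S^{2N+1}_{H, \theta})\}_\theta$ into a continuous field over the space of antisymmetric matrices and invoke Rieffel-type deformation invariance of $K$-theory.

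For part~(2), the cleanest route is a parallel Mayer--Vietoris on a multipullback presentation of $C(\mathbb{P}^N_\theta(\Tt))$ inherited from that of $C(S^{2N+1}_{H, \theta})$ by restricting to $U(1)$-fixed points. The fixed-point algebras of the twisted Toeplitz cubes under the diagonal gauge action have classically computable $K$-theory, and iterating Mayer--Vietoris yields $K_0 = \ZZ^{N+1}$ and $K_1 = 0$. Alternatively, $C^*$-freeness permits a Takai-duality plus Pimsner--Voiculescu argument, reducing $K_*(C(\mathbb{P}^N_\theta(\Tt)))$ to the sphere $K$-theory from~(1), provided the dual $\ZZ$-action on $K_0$ is identified through the tautological line bundle and its tensor powers.

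For part~(3), the goal is to distinguish the classes $[C(S^{2N+1}_H)_m] \in K_0(C(\PTt))$ as $m$ ranges over $\ZZ$. Using the universal presentation from Theorem~\ref{prp:pullback isomorphism}, I would construct a $U(1)$-equivariant $*$-homomorphism $\phi \colon C(S^{2N+1}_H) \to B$ to a test algebra $B$ whose spectral subspaces $B_m$ are verifiably distinct in $K_0(B^{U(1)})$; Theorem~\ref{thm:epullback} then produces a homomorphism $K_0(C(\PTt)) \to K_0(B^{U(1)})$ carrying $[C(S^{2N+1}_H)_m] \mapsto [B_m]$, forcing the stable non-isomorphism. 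The principal obstacle is constructing such a $\phi$: the images of the generators must be commuting isometries of circle-degree one in $B$ that jointly satisfy the sphere equation, and $B^{U(1)}$ must be $K$-theoretically rich enough to separate the integers. A natural candidate is a Toeplitz cube with diagonal gauge action, or a suitable quotient thereof in which the sphere equation holds, where the spectral subspaces are explicitly computable. A secondary difficulty is part~(1) in the twisted setting, where the $KK$-equivalence between twisted and untwisted Toeplitz cubes requires careful verification at each stage of the Mayer--Vietoris induction.
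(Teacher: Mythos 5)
Your part~(3) is essentially the paper's strategy, but the entire content lies in the step you flag as the ``principal obstacle'' and leave open: the choice of target. The paper maps $C(S^{2N+1}_H)\cong A_0(N+1)$ onto $A_0(2)\cong C(S^3_H)$ by sending $s_0\mapsto s_0$ and $s_j\mapsto s_1$ for all $j\ge 1$; this satisfies the sphere equation because $1-s_1s_1^*$ is a projection, so $\prod_{i=0}^N(1-s'_i s_i'^*)=(1-s_0s_0^*)(1-s_1s_1^*)=0$ in $A_0(2)$. It then invokes the known $N=1$ separation result \cite[Theorem~3.3]{hms03}. Your candidate of a Toeplitz cube with the gauge action cannot work, since $\prod_i(1-s_is_i^*)$ is a nonzero projection there and the sphere equation fails; and a ``suitable quotient in which the sphere equation holds'' is, absent the collapsing trick above, just the sphere algebra again. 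Without the explicit target and the external input for it, part~(3) is not complete.

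For parts~(1) and~(2) you propose genuinely different routes from the paper, and both have gaps. The paper does not run Mayer--Vietoris over the multipullback; it uses the universal presentation to identify $C(S^{2N+1}_H)\cong\Tt^{\otimes N+1}/\Kk(\ell^2(\NN^{N+1}))$ (Lemma~\ref{lem:inclusion}) and then a single ideal--quotient six-term sequence, where the boundary maps vanish because $[(1-ss^*)^{\otimes N+1}]=0$ in $K_0(\Tt^{\otimes N+1})$. Your iterated Mayer--Vietoris would require computing the $K$-theory of all intermediate partial pullbacks, which are not Toeplitz cubes, and you have not done this. More seriously, your twisted-case argument --- that the cocycle ``can be continuously deformed to the trivial one,'' hence $K$-theory is unchanged --- is not a proof: $K$-theory is not invariant along a continuous field without additional structure, and the $KK$-equivalence of twisted and untwisted Toeplitz cubes is exactly what needs proving. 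The paper instead quotes \cite[Theorem~6.1]{SimsWhiteheadEtAl:xx13}, which applies because the cocycle $c$ in~\eqref{eq:cdef} is the exponential of a real-valued one. For part~(2), the Takai/Pimsner--Voiculescu route is underdetermined: the six-term sequence relating $K_*(A)$ to $K_*(A^{U(1)})$ via $1-[L]\otimes{-}$ does not pin down $K_*(A^{U(1)})$ from $K_*(A)=(\ZZ,\ZZ)$ alone unless you already know the action of the tautological class on $K_0(A^{U(1)})$, which is close to assuming the answer. The paper's actual mechanism is the $U(1)$-equivariant extension $0\to C(S^{2k-1}_H)\otimes\Kk\to C(S^{2k+1}_H)\to\Tt^{\otimes k}\otimes C(S^1)\to 0$, tensored with $\Kk^{\otimes N-k}$, restricted to fixed points (exact for compact group actions), with the gauge trick identifying the quotient's fixed points as $\Tt^{\otimes k}\otimes\Kk^{\otimes N-k}$; an induction on $k$ then gives $\ZZ^{N+1}$ and $0$. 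You would need to supply an argument of comparable substance for either of your proposed alternatives.
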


Our multipullback approach to quantum odd spheres is based on the Heegaard-type splitting
of a $(2N+1)$-dimensional sphere into $N$-dimensional solid tori. Each odd-dimensional
sphere decomposes into a union of solid tori, along the lines of the Heegaard splitting
of the 3-sphere~\cite{h-p98}. Under this decomposition, the embedding of each component
torus in the sphere is equivariant for the diagonal $U(1)$-action. 
Taking quotients by the $U(1)$-actions yields a covering of the complex projective space
by quotients of solid tori, which is a closed restriction of the usual affine covering.

To obtain the untwisted ($\theta=0$) sphere algebras $C(S^{2N+1}_H)$, we study a noncommutative deformation of
this decomposition, using the point of view from \cite{KlimekLesniewski:JFA1993} that the
Toeplitz algebra $\mathcal{T}$ can be regarded as the $C^*$-algebra of a noncommutative
disc. In \cite{CM2000}, the authors constructed a decomposition of a 3-dimensional
quantum sphere along these lines by taking a pullback of two copies of the tensor product
of the circle algebra and the Toeplitz algebra. The index pairing of noncommutative line bundles over the resulting
 pullback quantum complex projective line (mirror quantum sphere)
was computed in~\cite{hms06b}.
Subsequently, in his Ph.D.\ thesis, Jan
Rudnik extended the construction in \cite{CM2000} to five dimensions using multipullback $C^*$-algebras.
One of his main results was establishing the stable nontriviality of the dual
tautological line bundle over the multipullback complex quantum projective
plane~\cite[Theorem~2.4]{hr}. In this paper, we carry this idea further to all odd
integers bigger than one. Very recently, Albert Jeu-Liang Sheu showed in \cite{s-ajl} that, for all dimensions, the multipullback
quantum-complex-projective-space $C^*$-algebras can be realized as groupoid $C^*$-algebras.

The paper is organized as follows. In Section~\ref{sec:background}, we recall   definitions and claims
crucial for the formulation and proofs of new results.
In Section~\ref{sec:twisted spheres}, we construct our multipullback quantum-odd-sphere $C^*$-algebras
and their twisted analogues. With the help of the theory of twisted higher-rank
graph $C^*$-algebras~\cite{SimsWhiteheadEtAl:xx13}, we establish that the twisted multipullback
quantum-odd-sphere $C^*$-algebras can be
presented in terms of a universal property (see Theorem~\ref{prp:pullback isomorphism}).
In Section~\ref{sec:projective}, we construct
 quantum complex projective space $C^*$-algebras and their twisted analogues as
fixed-point algebras for $U(1)$-actions on the corresponding sphere algebras. We identify
the untwisted quantum-projective-space algebras obtained in this way with the ones
constructed in \cite{hkz12} as multipullbacks. In Section~\ref{sec:PSpace Kth}, we prove
parts (\ref{it:main-sphere-Kth}) and (\ref{it:main-PS-Kth}) of Theorem~\ref{thm:main}.
In Section~\ref{sec:line bundles}, we use the  Chern-Galois theory of
\cite{bh04} to prove Theorem~\ref{thm:epullback}, which then we use to show
 Theorem~\ref{thm:main}\eqref{it:main-stabnontriv}.

\section{Background}\label{sec:background}

\subsection{Multipushouts, multipullbacks and the cocycle condition}
In what follows, we will construct algebras of functions on quantum spaces as
multipullbacks of $C^*$-algebras. To make sure that this construction corresponds via
duality to the presentation of a quantum space as a union of closed subspaces (see
\cite{HZ}), we assume the  cocycle condition (see Definition~\ref{def:cocycle}). First we
need some auxiliary definitions.

Let $(\pi^i_j:A_i\rightarrow A_{ij})_{i,j\in J,i\neq j}$ be a finite family of surjective
$C^*$-algebra homomorphisms, with $A_{ij} = A_{ji}$ for $i \neq j$. For all distinct
$i,j,k\in J$, we define $A^i_{jk}:=A_i/(\ker\pi^i_j+\ker\pi^i_k)$ and denote by
\mbox{$[\cdot]^i_{jk}:A_i\rightarrow A^i_{jk}$} the canonical surjections. For distinct
$i,j,k \in J$, define
\begin{equation*}
\pi^{ij}_k:A^i_{jk}\longrightarrow A_{ij}/\pi^i_j(\ker\pi^i_k),\quad\text{ by }\quad
[b_i]^i_{jk}\longmapsto\pi^i_j(b_i)+\pi^i_j(\ker\pi^i_k).
\end{equation*}
These $\pi^{ij}_k$ are isomorphisms when the $\pi^i_j$ are all surjective, as assumed
herein.

\begin{dfn}[Proposition~9 in \cite{CM2000}] \label{def:cocycle}
We say that a finite family $(\pi^i_j:A_i\rightarrow A_{ij})_{i,j\in J,i\neq j}$ of
surjective $C^*$-homomorphisms satisfies the {\em cocycle condition} if and only if, for
all distinct $i,j,k\in J$,
\begin{enumerate}
\item $\pi^i_j(\ker\pi^i_k)=\pi^j_i(\ker\pi^j_k)$, and
\item the isomorphisms
    $\phi^{ij}_k:=(\pi^{ij}_k)^{-1}\circ\pi^{ji}_k:A^j_{ik}\rightarrow A^i_{jk}$
    satisfy $\phi^{ik}_j=\phi^{ij}_k\circ\phi^{jk}_i$.
\end{enumerate}
\end{dfn}

Theorem~1 of \cite{HZ} implies that a finite family $(\pi^i_j:A_i\rightarrow
A_{ij})_{i,j\in J,i\neq j}$ of $C^*$-algebra surjections satisfies the cocycle condition
if and only if, for all $K\subsetneq J$, all $k\in J\setminus K$, and all $(b_i)_{i\in
K}\in \bigoplus_{i\in K}A_i$ such that $\pi^i_j(b_i)=\pi^j_i(b_j)$ for all distinct
$i,j\in K$, there exists $b_k\in A_k$ such that also $\pi^i_k(b_i)=\pi^k_i(b_k)$ for all
$i\in K$. This corresponds in the classical setting to the idea that all partial pushouts
of a collection of topological spaces embed in the total pushout.

\subsection{Heegaard-type splittings of odd spheres}\label{sec:Hsplittings}

We recall the Heegaard-type splittings of odd-dimensional spheres into solid tori. We
write $$\mathbb{T} := \{c \in \CC \;|\; |c| = 1\}$$ for the unit circle,  $D := \{c \in \CC
\;|\; |c| \le 1\}$ for the unit disc, and $$S^{2N+1} := \{(z_i)_i \in \CC^{N+1} \mid
\sum_{i=0}^N |z_i|^2 := 1\}$$ for the unit $(2N+1)$-dimensional sphere. For $0 \le i \le
N$, let
\begin{equation*}
V_i := \big\{(z_0,\ldots,z_N) \in S^{2N+1} \mid |z_i| = \max\{|z_0|, \dots, |z_N|\}\big\}.
\end{equation*}
Also, let $z:=(z_0,\ldots,z_N)$ and $d:=(d_0,\ldots,d_N)$. Then $\phi_i(z) :=
|z_i|^{-1}z$ determines a homeomorphism $\phi_i : V_i \to D^i \times \TT \times D^{N-i}
\subseteq \CC^{N+1}$, with inverse given by $\phi^{-1}_i(d) = (1+\sum_{j\neq
i}|d_j|^2)^{-\frac{1}{2}}\,d$.

These homeomorphisms allow us to present $S^{2N+1}$  as a multipushout of closed solid tori. Indeed, for each~$i$, let
$X_i := D^{i}\times \mathbb{T}\times D^{N-i}$, and for $i < j$, let
\begin{equation*}
X_{i,j} := D^{i}\times \mathbb{T}\times D^{j-i-1} \times \mathbb{T} \times D^{N-j} = X_i \cap X_j.
\end{equation*}
Then $S^{2N+1}$ is the multipushout of the solid tori $X_0, \dots X_N$ given by the
diagrams~\eqref{eq:spherediagpushout}.
\begin{equation}\label{eq:spherediagpushout}
\vcenter{ \xymatrix@C=0.5cm@R=0.7cm{
 & & S^{2N+1} & & \\
 X_i \ar@{-->}[urr] &
 V_i \ar[l]^-{\phi_i}\ar@{^{(}->}[ur] & &
 V_j\ar[r]_-{\phi_j}\ar@{_{(}->}[ul] &
 X_j\ar@{-->}[ull]\\
 X_{i,j} \ar@{^{(}->}[u] & &
 V_i\cap V_j \ar@{_{(}->}[ul] \ar@{^{(}->}[ur] \ar[ll]_-{\phi_{ij}} \ar[rr]^-{\phi_{ji}} & &
 X_{i,j} \ar@<0.5ex>@{_{(}->}[u]
}}
\end{equation}
So if $\sim$ is the equivalence relation on the disjoint union $\coprod_i X_i$ generated
by \mbox{$\phi_i(d) \sim \phi_j(d)$} for all $d \in V_i \cap V_j$ and all $i<j$, then
$S^{2N+1} \cong \big(\coprod_i X_i\big)/{\sim}$. (Note that
$\phi_{ji}\circ\phi_{ij}^{-1}=\id_{X_{i,j}}$.)

To motivate our definition of Heegaard quantum spheres later on, we dualize this
multipushout picture of $S^{2N+1}$ to obtain a multipullback presentation of
$C(S^{2N+1})$. Let $\operatorname{res} : C(D) \to C(\TT)$ be the restriction map. For $i
< j$, we write
\begin{multline*}
\pi^i_j :
C(D)^{\otimes i}\otimes  C(\mathbb{T})\otimes C(D)^{{\otimes N-i}}\\
\longrightarrow
 C(D)^{\otimes i}\otimes C(\mathbb{T})\otimes C(D)^{\otimes j-i-1} \otimes C(\mathbb{T})\otimes
C(D)^{\otimes N-j}
\end{multline*}
for the  surjection $\id^{\otimes j} \otimes \operatorname{res} \otimes \id^{\otimes
N-j}$. Then $C(S^{2N+1})$ is naturally isomorphic to
\begin{equation*}
\big\{(f_0, \dots, f_N) \in \mbox{$\bigoplus^N_{i=0}$}
C(D)^{\otimes i}\otimes C(\mathbb{T})\otimes C(D)^{\otimes {N-i}}
   \mathbin{\mid} \pi^i_j(f_j) = \pi^j_i(f_i)\;\text{ for all }i < j\}.
\end{equation*}

\subsection{Gauging diagonal actions and coactions}\label{sec:gauging}

Throughout this paper, we denote a right action of a group $G$ on a space $X$ by
juxtaposition, that is $(x,g) \mapsto xg$. The general idea for converting between
diagonal and rightmost actions of a group $G$ on a topological space $X$ is as follows.
We regard $X\times G$ as a right $G$-space in two different ways, which we distinguish
notationally as follows.
\begin{itemize}
\item We write $(X\times G)^R$ for the  product $X\times G$ with $G$-action
    \mbox{$(x,g)\cdot h := (x, gh)$}.
\item We write $X\times G$ for the same space with diagonal $G$-action
$(x, g) h = (x h,
    gh)$.
\end{itemize}
There is a $G$-equivariant homeomorphism $\kappa : (X\times G)^R \to X\times G$
determined by $\kappa(x, g) := (x g, g)$, with inverse given by $\kappa^{-1}(x,g) = (x g^{-1},g)$.
In general, given any cartesian product of $G$-spaces, we will regard it as a $G$-space
with the diagonal action, except for those of the form $(X \times G)^R$ just described.

In what follows, the tensor product means completed tensor product, and we use the
Heynemann-Sweedler notation (with the summation sign suppressed) for this completed
product. Since all $C^*$-algebras that we tensor are nuclear, this completion  is unique.
We often identify the unit circle $\mathbb{T}$ with the unitary group $U(1)$, and use the
quantum group structure on~$C(U(1))$. Even though we only use the classical compact
Hausdorff group $U(1)$, we are forced to use the quantum-group language of coactions,
etc., to write explicit formulas, and carry out computations.

Let $G$ be a locally compact group, and let $H := C(G)$. Then $S\colon H\to H$, given by
$S(h)(g):=h(g^{-1})$, is the antipode map, $\varepsilon(h):=h(e)$ defines the counit ($e$ is the neutral
element of~$G$), and
\begin{gather*}
\Delta\colon H\longrightarrow H\otimes H\cong C(G\times G), \\
\Delta(h)(g_1,g_2):=h(g_1g_2)=:(h_{(1)}\otimes h_{(2)})(g_1,g_2)= h_{(1)}(g_1)
h_{(2)}(g_2),
\end{gather*}
is a coproduct. If $\alpha:G\to \mathrm{Aut}(A)$ is a $G$-action on a unital $C^*$-algebra $A$, then there is a coaction
$\delta\colon A\rightarrow A\otimes H\cong C(G,A)$ given by
\begin{equation*}
\delta(a)(g):=\alpha_g(a)=:(a_{(0)}\otimes a_{(1)})(g) = a_{(0)}a_{(1)}(g).
\end{equation*}

Consider $A\otimes H$ as a $C^*$-algebra with the diagonal coaction
$$
p\otimes h\longmapsto
p\sw{0}\otimes h\sw{1}\otimes p\sw{1}h\sw{2},
$$
 and denote by $(A\otimes H)^R$ the same
$C^*$-algebra with the coaction on the rightmost factor: $p\otimes h\mapsto p\otimes
h\sw{1}\otimes h\sw{2}$. Then the following
map is a $G$-equivariant (i.e., intertwining the coactions) isomorphism of $C^*$-algebras:
\begin{equation}\label{kappa}
\widehat{\kappa}:(A\otimes H)\longrightarrow (A\otimes H)^R,\quad a\otimes h\longmapsto a\sw{0}\otimes a\sw{1}h.
\end{equation}
\noindent
Its inverse is explicitly given  by
\begin{equation}\label{kappa-1}
\widehat{\kappa}^{-1}:(A\otimes H)^R\longrightarrow (A\otimes H),
\quad a\otimes h\longmapsto a\sw{0}\otimes S(a\sw{1})h.
\end{equation}

\subsection{Affine closed coverings of complex projective spaces}

The odd sphere $S^{2N+1}$ is a $U(1)$-principal bundle. The diagonal action of $U(1)$ on
$S^{2N+1}$ is given by
\begin{equation*}
(z_0,\ldots,z_N) \lambda := (z_0\lambda,\ldots, z_N\lambda).
\end{equation*}
Since $\TT \subseteq D$ is rotation-invariant, this action restricts to a $U(1)$-action
on each \mbox{$D^1\! \times\! \TT\! \times\! D^{N-1}$}, so the multipushout given
by~\eqref{eq:spherediagpushout} is $U(1)$-equivariant.

To obtain a multipushout presentation of $\mathbb{P}^N(\CC) = S^{2N+1}/{U(1)}$, we need
to gauge the diagonal actions to actions on the rightmost components. This will yield an
alternative multipushout presentation of~$S^{2N+1}$. Using the notation of
Section~\ref{sec:gauging}, we write \mbox{$\kappa : (D^N\times U(1))^R \to D^N \times
U(1)$} for the gauging homeomorphism. Identify $U(1)$ with~$\mathbb{T}$, and write
$F_{i,N} : D^N \times U(1) \to D^i \times \mathbb{T} \times D^{N-i}$ for the map
\begin{equation*}
F_{i,N}(d_0, \dots,d_{i-1}, d_i, d_{i+1} \dots, d_{N-1}, d_N)
    := (d_0, \dots,d_{i-1}, d_N, d_{i+1} \dots, d_{N-1}, d_i).
\end{equation*}
We obtain a $U(1)$-equivariant homeomorphism
\begin{equation*}
h_i := F_{i,N} \circ \kappa : (D^N\times U(1))^R \longrightarrow
    D^i \times \mathbb{T}\times D^{N-i}.
\end{equation*}

Let $X^R_i := (D^{N}\times U(1))^R$ for all~$i$. For $i < j < N$, let
\begin{gather*}
X^R_{i,j} := \big(D^{i} \times \mathbb{T} \times D^{N-i-1} \times U(1)\big)^R,\quad
X^R_{j,i} := \big(D^{j-1} \times \mathbb{T} \times D^{N-j} \times U(1)\big)^R,\nonumber\\
\text{and }X_{i,j}:= D^i\times \mathbb{T}\times D^{j-i-1}\times \mathbb{T}\times D^{N-j} =: X_{j,i}.
\end{gather*}
For $i \not= j$, we define $h_{ij} := h_i\bigl|_{X^R_{j,i}} : X^R_{j,i} \to X_{i,j} =
X_{j,i} $.

We use the $h_i$ and $h_{ij}$ to transform the multipushout structure of $S^{2N+1}$
described by~\eqref{eq:spherediagpushout}. Explicitly, for $0\leq i<j\leq N$, we obtain
the commuting diagram~\eqref{eq:wings}.
\begin{equation}\label{eq:wings}
\vcenter{
\xymatrix{
X^R_i\ar[r]^-{h_i}& X_i&&X_j
&X^R_j\ar[l]_-{h_j}\\
X^R_{j,i}\ar[rrd]^-{h_{ij}}\ar@{_{(}->}[u]&&
&&
X^R_{i,j}\ar[lld]_-{h_{ji}}\ar@{_{(}->}[u]\\
&&
{}\save[]+<-0cm,-0.2cm>*\txt<25pc>{
$X_{i,j}$} \restore \phantom{aaaaa} \ar@<0.4ex>@{_{(}->}[uul] \ar@<-0.4ex>@{^{(}->}[uur]
&&
}}
\end{equation}
\noindent
For $i < j$, we define $
     \chi_{ij} := h_{ji}^{-1}\circ h_{ij} : X^R_{j,i} \to X^R_{i,j}
$. (Note that, unlike in the previous multipushout presentation of $S^{2N+1}$, these maps are not identities.)
With this notation, $S^{2N+1}$ is homeomorphic to the quotient of the disjoint union
\begin{equation*}
\coprod_{0\leq i\leq N}(D^{N}\times U(1))^R = \coprod_{0\leq i\leq N} X^R_i
\end{equation*}
by the smallest equivalence relation such that $d \sim \chi_{ij}(d)$ for all $d \in
X^R_{j,i}$. The equivalence relation $\sim$ respects the $U(1)$-actions, so that we
obtain a multipushout presentation of $S^{2N+1}/U(1)\cong\mathbb{P}^N(\mathbb{C})$ by
everywhere restricting $U(1)$ to a point. This multipushout presentation of the complex
projective space agrees with the multipushout presentation used in
\cite[Section~1.2]{hkz12} to obtain the multipullback noncommutative deformation
of~$\mathbb{P}^N(\mathbb{C})$.

\section{Twisted multipullback quantum odd spheres}\label{sec:twisted spheres}

\subsection{Twisted quantum even balls}

Recall that we regard the Toeplitz algebra $\Tt$ as the quantum-disc $C^*$-algebra~\cite{KlimekLesniewski:JFA1993}. Let
$s$ be the generating isometry in $\Tt$ \cite{c-la67,c-la69} and $u$ the generating unitary in $C(\TT)$. Let
$\sigma : \Tt \to C(\TT)$, $s \mapsto u$, denote the symbol map.
 We use the exact sequence
\begin{equation*}
0 \longrightarrow \Kk \longrightarrow \Tt \stackbin{\sigma}{\longrightarrow} C ( \TT ) \longrightarrow 0
\end{equation*}
to regard the circle $\mathbb{T}$ as the boundary of the quantum disc, or two-dimensional quantum ball.
Thus the one-dimensional quantum sphere then corresponds to the quotient $\Tt / \Kk$.
From this perspective, $\Tt^{\otimes N}$ can be regarded as the algebra of a Cartesian
product of $N$ \mbox{two-dimensional} balls, and therefore as a copy of a
$2N$-dimensional (non-round) quantum ball. The quotient $\Tt^{\otimes N+1}/ \Kk^{\otimes
N+1}$ is then viewed as the algebra of the boundary of the quantum ball, that is, a
quantum sphere of dimension $2N+1$. In the same spirit, $\Tt^{\otimes N} \otimes C ( \TT
)$ is regarded as the algebra of the Cartesian product of a $2N$-ball and a circle, which
is to say a $(2N+1)$-dimensional noncommutative solid torus.

By analogy with the
Heegaard splitting of $S^{2N+1}$ in the preceding section, we define the algebra
$C(S^{2N+1}_H)$ of continuous functions on the Heegaard quantum sphere as a multipullback
of the $C^*$-algebras $\Tt^{\otimes i} \otimes C(\mathbb{T}) \otimes \Tt^{\otimes N-i}$
with respect to the maps
\begin{equation*}
\quad\pi^i_j\colon \Tt^{\otimes i} \otimes C(\mathbb{T}) \otimes \Tt^{\otimes N-i}\longrightarrow
\Tt^{\otimes i} \otimes C(\mathbb{T}) \otimes \Tt^{\otimes j-i-1}\otimes C(\mathbb{T})\otimes\Tt^{\otimes N-j},
\quad i<j,
\end{equation*}
given by $\pi^i_j := \id_{\Tt^{\otimes i} \otimes C(\TT) \otimes \Tt^{j-i-1}} \otimes
\sigma \otimes \id_{\Tt^{\otimes N-j}}$.

In Section~\ref{untwisted}, we will realize $C(S^{2N+1}_H)$ as the special case where
$\theta = 0$ of a multipullback of twisted tensor products of the same sort. We begin by
defining the twisted Toeplitz algebras $\Tt^{N+1}_\theta$, which we view as
twisted-quantum-ball $C^*$-algebras.

\begin{dfn}
Fix $N > 0$, and suppose that $\theta = (\theta_{ij})^N_{i,j = 0} \in M_{N+1}(\RR)$ is
antisymmetric in the sense that $\theta_{ij} = -\theta_{ji}$. We define the twisted
Toeplitz algebra $\Tt^{N+1}_\theta$ to be the universal $C^*$-algebra generated by
isometries $\{w_{0}^\theta, \dots, w^\theta_{N}\}$ such that
\begin{equation*}
w_{j}^\theta w_{k}^\theta = e^{2\pi i \theta_{jk}} w_{k}^\theta w_{j}^\theta
    \quad\text{ and }\quad
w_{j}^{\theta*} w_{k}^\theta = e^{-2\pi i \theta_{jk}} w_{k}^\theta
    w^{\theta *}_{j}\quad\text{ for all $j \not= k$.}
\end{equation*}
\end{dfn}

With this in hand, we are ready to present our definition of the twisted Heegaard quantum
sphere~$S^{2N+1}_{H, \theta}$, which we view as the boundary of a twisted quantum ball.
Thus we generalize the 3-dimensional case $S^{3}_{H, \theta}$ introduced and analyzed
in~\cite{bhms05}.
\begin{dfn}\label{dfn:twisted sphere}
For $0 \le i \le N$, let $I_i^\theta$ denote the ideal of $\Tt^{N+1}_\theta$ generated by
$1 - w_{i}^\theta w^{\theta*}_{i}$, and for $i \not= j$, let $I_{ij}^\theta := I_i^\theta
+ I_j^\theta$. Let $B_i^\theta := \Tt^{N+1}_\theta/I_i^\theta$ and $B_{ij}^\theta :=
\Tt^{N+1}_\theta / I_{ij}^\theta$. Also, let
\begin{equation}\label{sigmaitheta}
\sigma_i : \Tt^{N+1}_\theta \longrightarrow B_i^\theta\quad\text{ and }\quad \pi^i_j : B_i^\theta \longrightarrow B_{ij}^\theta
\end{equation}
\noindent be the natural quotient maps. We define the \emph{twisted Heegaard quantum
sphere} $C^*$-algebra as the multipullback of the algebras $B_i^\theta$ over the
homomorphisms $\pi^i_j$, that is
\begin{equation*}\textstyle
C(S^{2N+1}_{H, \theta}) := \Big\{(b_0, \dots, b_N) \in \bigoplus^N_{i=0} B_i^\theta \;\Big|\; \pi^i_j(b_i)
= \pi^j_i(b_j)\text{ for all } 0 \le i < j \le N\Big\}.
\end{equation*}
\end{dfn}

To ease notation we define $w_k^{\theta;i} := \sigma_i(w^\theta_k)$ and $w_k^{\theta;ij}
:= w^\theta_k+I_i^\theta+I_j^\theta$ for all $k$ and distinct $i,j$. We define
\mbox{$\mathbf{s}_i \in C(S^{2N+1}_{H,\theta})$} by
\begin{equation*}
\mathbf{s}_i := \big(w^{\theta;0}_i, \dots, w^{\theta;N}_i\big).
\end{equation*}
For $i,j\in\{0,\ldots,N\}$, we have
\begin{gather}
\mathbf{s}_i\mathbf{s}_j=e^{2\pi i\theta_{ij}}\mathbf{s}_j\mathbf{s}_i,\quad
    \mathbf{s}_i\mathbf{s}_j^*=e^{-2\pi i\theta_{ij}}\mathbf{s}_j^*\mathbf{s}_i,\quad\text{when } i\neq j,\nonumber\\
\mathbf{s}_i^*\mathbf{s}_i=1,\quad\text{ and}\nonumber\\
\prod_{k=0}^N(1-\mathbf{s}_k\mathbf{s}_k^*)=0.\label{prodprop}
\end{gather}

The universal property of $\Tt^{N+1}_\theta$ yields a $U(1)^{N+1}$-action satisfying
$(\lambda_0, \dots, \lambda_N)\cdot w^\theta_j = \lambda_jw^\theta_j$. We call this the
\emph{gauge action} on $\Tt^{N+1}_\theta$. This action descends to each $B_i$ and each
$B_{ij}$, and hence induces a $U(1)^{N+1}$-action on $C(S^{2N+1}_{H, \theta})$, also
called the gauge action. Restricting to the diagonal in $U(1)^{N+1}$ gives a
$U(1)$-action $\alpha$ on $C(S^{2N+1}_{H, \theta})$ such that
\begin{equation} \label{eq:alphadef}
\alpha_\lambda(b_0, \dots, b_N) = (\lambda \cdot b_0, \dots, \lambda \cdot b_N).
\end{equation}

\subsection{A universal presentation}

We prove, using Whitehead's twisted relative Cuntz--Krieger algebras of higher-rank
graphs \cite{BenThesis} (see also \cite{SimsWhiteheadEtAl:xx13}), that the twisted
Heegaard quantum sphere $C^*$-algebra of Definition~\ref{dfn:twisted sphere} enjoys a
universal property.

\begin{thm}\label{prp:pullback isomorphism}
Consider an integer $N \ge 1$ and a antisymmetric matrix $\theta \in M_{N+1} (\RR)$. Let
$A_\theta(N+1)$ be the universal $C^*$-algebra generated by isometries $s_0, \dots, s_N$
satisfying
\begin{equation}\label{eq:commutation}
s_i s_j = e^{2\pi i \theta_{ij}} s_j s_i\quad\text{ and }\quad
s_i s^*_j = e^{-2\pi i\theta_{ij}} s^*_j s_i,
\end{equation}
and the sphere equation
\begin{equation}\label{eq:sphere}
\prod^N_{i=0} (1 - s_i s^*_i) = 0.
\end{equation}
\noindent Then there is a $U(1)$-action on $A_\theta(N+1)$ such that $\lambda \cdot s_i =
\lambda s_i$ for all $i$, and there is a $U(1)$-equivariant isomorphism $\phi_\theta :
A_\theta(N+1) \to C(S^{2N+1}_{H,\theta})$ such that
\begin{equation*}
\phi_\theta(s_i) = \mathbf{s}_i = \big(w^{\theta;0}_i, \dots, w^{\theta;N}_i\big) \quad\text{ for all $i$.}
\end{equation*}
\noindent Furthermore, the maps $\pi^i_j : B_i \to B_{ij}$ satisfy the cocycle condition
of Definition~\ref{def:cocycle}.
\end{thm}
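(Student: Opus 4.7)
The plan is to treat the four assertions of Theorem~\ref{prp:pullback isomorphism} in order. The universal $C^*$-algebra $A_\theta(N+1)$ exists because the relation $s_i^* s_i = 1$ forces $\|s_i\| = 1$, bounding the universal norm. The gauge $U(1)$-action arises directly from universality, since for each $\lambda \in U(1)$ the family $\{\lambda s_0, \ldots, \lambda s_N\}$ still satisfies \eqref{eq:commutation} and \eqref{eq:sphere}, yielding an automorphism with inverse coming from $\bar\lambda$. The computations already carried out in the paragraph preceding \eqref{prodprop} show that the elements $\mathbf{s}_i$ in $C(S^{2N+1}_{H,\theta})$ satisfy the same relations, so the universal property produces a $*$-homomorphism $\phi_\theta : A_\theta(N+1) \to C(S^{2N+1}_{H,\theta})$ with $\phi_\theta(s_i) = \mathbf{s}_i$, and $U(1)$-equivariance is immediate from the definitions of the two actions.

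The main obstacle is showing that $\phi_\theta$ is an isomorphism. The plan is to exploit the twisted higher-rank graph $C^*$-algebra framework of \cite{SimsWhiteheadEtAl:xx13, BenThesis}. Let $\Omega_{N+1}$ denote the single-vertex $(N+1)$-graph with $N+1$ commuting loops, equipped with the $U(1)$-valued $2$-cocycle determined by $\theta$. The twisted Toeplitz algebra of this $k$-graph is naturally identified with $\Tt^{N+1}_\theta$; accordingly, $A_\theta(N+1)$ is realized as the quotient of this Toeplitz algebra by the ideal generated by $\prod_i(1-s_i s_i^*)$, which corresponds to a specific twisted relative Cuntz--Krieger algebra. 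The natural $U(1)^{N+1}$-gauge action on the $k$-graph algebra descends to $A_\theta(N+1)$ and restricts along the diagonal embedding $U(1) \hookrightarrow U(1)^{N+1}$ to match the $U(1)$-action on the multipullback through $\phi_\theta$. Injectivity then follows from the twisted gauge-invariant uniqueness theorem, provided one verifies that the relevant vertex projections remain nonzero in each coordinate $B_i^\theta$ of $C(S^{2N+1}_{H,\theta})$. Surjectivity reduces to showing that the canonical map $\Tt^{N+1}_\theta \to C(S^{2N+1}_{H,\theta})$ is surjective, equivalently that $\bigcap_i I_i^\theta$ coincides with the ideal generated by $\prod_i(1 - w_i^\theta w_i^{\theta*})$. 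In the untwisted case this reduces to the standard identification $\Kk^{\otimes N+1} = \bigcap_i \Tt^{\otimes i} \otimes \Kk \otimes \Tt^{\otimes N-i}$ inside $\Tt^{\otimes N+1}$, and the twisted case follows analogously via the $k$-graph grading.

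The cocycle condition of Definition~\ref{def:cocycle} is then a matter of direct algebraic bookkeeping. Since $B_i^\theta = \Tt^{N+1}_\theta/I_i^\theta$ and $B_{ij}^\theta = \Tt^{N+1}_\theta/(I_i^\theta + I_j^\theta)$, the kernel of $\pi^i_k$ inside $B_i^\theta$ is the image of $I_k^\theta$; hence $\pi^i_j(\ker \pi^i_k)$ is the image of $I_k^\theta$ in $B_{ij}^\theta = B_{ji}^\theta$, which is visibly symmetric in $i$ and $j$, yielding condition~(1). For condition~(2), each triple quotient $A^i_{jk}$ is canonically $\Tt^{N+1}_\theta/(I_i^\theta + I_j^\theta + I_k^\theta)$, and this identification is symmetric in $i, j, k$, so every $\pi^{ij}_k$ becomes an identity map under these canonical identifications. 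Consequently each $\phi^{ij}_k$ is the identity, and the cocycle identity $\phi^{ik}_j = \phi^{ij}_k \circ \phi^{jk}_i$ is trivially satisfied.
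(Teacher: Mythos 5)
Your overall strategy for injectivity (realize $A_\theta(N+1)$ as a twisted relative Cuntz--Krieger algebra of the single-vertex $(N+1)$-graph $\NN^{N+1}$ and invoke the gauge-invariant uniqueness theorem) is exactly the paper's, and your direct verification of the cocycle condition via the canonical identifications $A^i_{jk}\cong \Tt^{N+1}_\theta/(I^\theta_i+I^\theta_j+I^\theta_k)$ is a legitimate, if anything more elementary, alternative to the paper's route through \cite[Theorem~1]{HZ}. However, there is a genuine gap in your treatment of surjectivity. You reduce surjectivity of $\phi_\theta$ to the claim that $\bigcap_i I^\theta_i$ equals the ideal generated by $\prod_i(1-w^\theta_i w^{\theta*}_i)$, but that is \emph{not} equivalent to surjectivity: it is the statement that $\ker\psi_\theta$ coincides with the ideal you quotient by, i.e.\ it bears on the \emph{injectivity} of the induced map $\phi_\theta$ (which you are in any case handling by gauge-invariant uniqueness). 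Surjectivity is the assertion that every compatible tuple $(b_0,\dots,b_N)$ in the multipullback is of the diagonal form $(\sigma_0(a),\dots,\sigma_N(a))$ for a single $a\in\Tt^{N+1}_\theta$, and this does not follow from any computation of kernels. The paper supplies exactly this missing ingredient as Lemma~\ref{lem:quotient surjective}: given ideals $I_0,\dots,I_n$ of a $C^*$-algebra and elements $a_i$ with $a_i+(I_i+I_j)=a_j+(I_i+I_j)$ for all $i,j$, there is a single $a$ with $a+I_i=a_i+I_i$ for all $i$. Its proof is an induction that crucially uses the distributivity of the ideal lattice of a $C^*$-algebra (to rewrite $\bigcap_{i<n}(I_i+I_n)$ as $\big(\bigcap_{i<n}I_i\big)+(\cdots)$); without some such lifting argument your proof of surjectivity does not close.

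Two smaller points. First, in the injectivity step the hypothesis of the gauge-invariant uniqueness theorem \cite[Theorem~3.15]{SimsWhiteheadEtAl:xx13} is not merely that ``the relevant vertex projections remain nonzero'': the graph has a single vertex whose projection is the identity, so that is automatic. What must actually be checked is that for every finite $F$ outside the satiation $\overline{\Ee}$ of $\Ee=\{\{e_0,\dots,e_N\}\}$ the gap projection $\prod_{\mu\in F}\big(s^c_\Ee(0)-s^c_\Ee(\mu)s^c_\Ee(\mu)^*\big)$ has nonzero image; the paper does this by computing $\overline{\Ee}$ explicitly and reducing to the nonvanishing of $\sigma_j\big(\prod_{l\ne j}(1-w_lw_l^*)\big)$ via \cite[Proposition~3.9]{SimsWhiteheadEtAl:xx13}. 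This is a genuine computation you should not elide. Second, note that the paper deduces the cocycle condition from the same Lemma~\ref{lem:quotient surjective} together with \cite[Theorem~1]{HZ}, so repairing the surjectivity gap would also let you absorb your separate bookkeeping argument; as written, though, your cocycle-condition verification stands on its own.
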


The existence of the $U(1)$-action on $A_\theta(N+1)$ and of the homomorphism
$\phi_\theta$ follows from the universal property of $A_\theta  (N+1)$. We use the
technology of twisted relative higher-rank graph $C^*$-algebras
\cite{SimsWhiteheadEtAl:xx13} to see that $\phi_\theta$ is injective. For surjectivity,
and to see that the cocycle condition is satisfied, we will need the following technical
lemma.

\begin{lem}\label{lem:quotient surjective}
Let $A$ be a $C^*$-algebra and suppose that $I_0, \dots, I_n$ are ideals of $A$. Suppose
that $a_0, \dots, a_n \in A$ satisfy $a_i + (I_i + I_j) = a_j + (I_i + I_j)$ for all
$i,j$. Then there exists $a \in A$ such that $a + I_i = a_i + I_i$ for all $i$.
\end{lem}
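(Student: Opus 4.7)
My plan is to prove this by induction on $n$, reducing the multi-ideal patching problem to the binary case $n=1$ via a repeated application of the fact that the lattice of closed two-sided ideals of a $C^*$-algebra is distributive.

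The base case $n=1$ is a direct computation: the hypothesis $a_0 - a_1 \in I_0 + I_1$ lets us write $a_0 - a_1 = x_0 + x_1$ with $x_j \in I_j$, and then $a := a_0 - x_0 = a_1 + x_1$ works. For the inductive step, assume the lemma for $n-1$ and apply it to $a_0, \dots, a_{n-1}$ to obtain $b \in A$ with $b - a_i \in I_i$ for all $i \le n-1$. It now suffices to modify $b$ so as to also match $a_n$ modulo $I_n$, without disturbing the first $n$ congruences. Concretely, I want $a \in A$ satisfying
\begin{equation*}
a - b \in \bigcap_{i=0}^{n-1} I_i \quad\text{and}\quad a - a_n \in I_n.
\end{equation*}
Such an $a$ exists if and only if $b - a_n \in \bigl(\bigcap_{i=0}^{n-1} I_i\bigr) + I_n$, and given any decomposition $b - a_n = y + z$ with $y \in \bigcap_{i<n} I_i$ and $z \in I_n$, I set $a := b - y = a_n + z$.

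To show that $b - a_n$ admits such a decomposition, I first note that for each $i < n$,
\begin{equation*}
b - a_n = (b - a_i) + (a_i - a_n) \in I_i + (I_i + I_n) = I_i + I_n,
\end{equation*}
using the inductive conclusion together with the hypothesis $a_i - a_n \in I_i + I_n$. Hence $b - a_n \in \bigcap_{i=0}^{n-1}(I_i + I_n)$. The key step is then the distributive identity
\begin{equation*}
\bigcap_{i=0}^{n-1}(I_i + I_n) = \Bigl(\bigcap_{i=0}^{n-1} I_i\Bigr) + I_n,
\end{equation*}
which holds because the lattice of closed two-sided ideals of any $C^*$-algebra is distributive (it is isomorphic, under the Dauns--Hofmann correspondence, to the lattice of open subsets of the primitive ideal space, with $\cap$ and $+$ corresponding to intersection and union of open sets respectively).

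The main obstacle is this last distributivity step: the nontrivial inclusion $\subseteq$ is purely $C^*$-algebraic, while $\supseteq$ is trivial. I would either cite the Dauns--Hofmann/primitive-ideal-space picture, or give a direct proof using an approximate identity $(e_\lambda)$ for $I_n$: for $x \in \bigcap_i(I_i + I_n)$, one checks that $x - x e_\lambda$ converges to an element of $\bigcap_i I_i$ while $x e_\lambda \in I_n$, yielding the desired decomposition in the limit. Either way, once distributivity is in hand, the induction closes and the lemma follows.
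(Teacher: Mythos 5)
Your proof is correct and follows essentially the same route as the paper's: induction on $n$, the observation that the partial solution differs from $a_n$ by an element of $\bigcap_{i<n}(I_i+I_n)$, and distributivity of the lattice of closed ideals to rewrite this as $\bigl(\bigcap_{i<n}I_i\bigr)+I_n$. The only difference is cosmetic — the paper simply cites distributivity where you sketch a justification (your approximate-identity argument works, provided you phrase the conclusion as $\operatorname{dist}\bigl(x-xe_\lambda,\bigcap_i I_i\bigr)=\max_i\operatorname{dist}(x-xe_\lambda,I_i)\to 0$ rather than as convergence of $x-xe_\lambda$ itself).
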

\begin{proof}
We proceed by induction on $n$. The base case $n = 0$ is trivial. Suppose as an inductive
hypothesis that there exists $a' \in A$ such that $a' + I_i = a_i + I_i$ for all $i < n$.
Then $a' + (I_i + I_n) = a_n + (I_i + I_n)$ for all $i < n$, whence
\begin{equation}\label{eq:a'-an}\textstyle
a' - a_n \in \bigcap_{i<n} (I_i + I_n).
\end{equation}
Since the ideals of the $C^*$-algebra $A$ form a distributive lattice with meet given by
intersection and join given by sum, we have
\begin{equation*}
\bigcap_{i<n} (I_i + I_n)
	= \Big(\bigcap_{i<n} I_i\Big) + \sum_{\emptyset \not= F \subseteq \{0, \dots, n-1\}}
							\Big(I_n \cap \bigcap_{i \not\in F} I_i\Big)
	\subseteq \Big(\bigcap_{i<n} I_i\Big) + I_n.
\end{equation*}
Combining this with~\eqref{eq:a'-an}, we obtain $a' - a_n = b' - b_n$, where $b' \in
\bigcap_{i=0}^{n-1} I_i$ and $b_n \in I_n$. Put $a := a' - b'$. Since $b' \in I_i$ for
all $i \le n-1$, we have $a + I_i = a' + I_i = a_i + I_i$ for $i \le n-1$. Furthermore,
$a = a' - b' = a_n - b_n$ and $b_n \in I_n$, so $a + I_n = a_n + I_n$ too.
\end{proof}

To prove Theorem~\ref{prp:pullback isomorphism}, we use twisted higher-rank graph
$C^*$-algebras. The general theory of these objects requires significant background, but
fortunately the only higher-rank graphs we need to consider are the following elementary
examples.

Let $\Lambda$ denote a copy of the monoid $\NN^{N+1}$ under addition. This becomes an
$(N+1)$-graph in the sense of \cite[Definition~1.1]{KumjianPask:NYJM2000} under the
degree map $d : \Lambda \to \NN^{N+1}$ given by the identity map on $\NN^{N+1}$. We write
$e_0,\dots, e_N$ for the canonical generators of $\NN^{N+1}$. Since we are viewing
$\Lambda$ as a category, we write $\mu\nu$ for the composition of elements $\mu,\nu$.
This is really just $\mu + \nu$ when the two are regarded as elements of $\NN^{N+1}$. The
unique vertex of $\Lambda$ is $0 \in \NN^{N+1}$. For $\mu = (\mu_0, \dots, \mu_N) \in
\Lambda$, we write $|\mu| := \sum^N_{i=0} \mu_i$. A \emph{cocycle} on $\Lambda$ is a map
$c : \Lambda \times \Lambda \to \TT$ satisfying the cocycle identity
$c(\mu,\nu)c(\lambda,\mu\nu) = c(\lambda,\mu)c(\lambda\mu,\nu)$ for all $\lambda,\mu,\nu
\in \Lambda$. Since $\NN^{N+1}$ is directed, every finite $F \subseteq \Lambda \setminus
\{0\}$ is exhaustive as in \cite[Section~2]{SimsWhiteheadEtAl:xx13}. So given any
collection $\Ee$ of finite subsets of $\Lambda \setminus \{0\}$, we can form the twisted
relative Cuntz--Krieger algebra $C^*(\Lambda, c; \Ee)$, which is generated by isometries
$\{s^c_\Ee(\lambda) : \lambda \in \Lambda\}$ satisfying relations (TCK1)--(TCK4)~and~(CK)
of \cite[Section~3]{SimsWhiteheadEtAl:xx13}.

\begin{lem}\label{lem:k-graph alg}
Let $\Lambda$ denote $\NN^{N+1}$ regarded as an $(N+1)$-graph as above. Fix a
antisymmetric matrix $\theta \in M_{N+1}(\RR)$. There is a cocycle $c$ on $\Lambda$ given
by
\begin{equation} \label{eq:cdef}
    c(\mu,\nu) := e^{\pi i (d(\mu)^T\! \theta d(\nu))}.
\end{equation}
Let $\Ee := \{\{e_0, \dots, e_N\}\}$. Then there is an isomorphism $A_\theta(N+1) \to
C^*(\Lambda, c; \Ee)$ that carries $w_i \in A_\theta(N+1)$ to $s^c_\Ee(e_i) \in
C^*(\Lambda, c; \Ee)$ for $0 \le i \le N$.
\end{lem}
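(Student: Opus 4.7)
The plan is to prove the lemma by exhibiting mutually inverse homomorphisms between $A_\theta(N+1)$ and $C^*(\Lambda, c; \Ee)$ using the universal properties of both algebras. First, to see that $c$ is a cocycle, I would exploit that the exponent $d(\mu)^T A(\theta) d(\nu)$ is bilinear in $d(\mu)$ and $d(\nu)$, and that composition in $\Lambda$ is addition in $\NN^{N+1}$; the cocycle identity then becomes the manifest equality
\[
d(\mu)^TA(\theta)d(\nu) + d(\lambda)^TA(\theta)(d(\mu)+d(\nu)) = d(\lambda)^TA(\theta)d(\mu) + (d(\lambda)+d(\mu))^TA(\theta)d(\nu).
\]

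For the forward map, write $T_i := s^c_\Ee(e_i)$. Since $\Lambda$ has a single vertex $0$, relation (TCK1) forces $p_0 = 1$ and makes each $T_i$ an isometry. Applying (TCK2) twice gives
\[
T_iT_j = c(e_i,e_j)\,s^c_\Ee(e_i+e_j)\quad\text{and}\quad T_jT_i = c(e_j,e_i)\,s^c_\Ee(e_j+e_i),
\]
so that $T_iT_j = c(e_i,e_j)\overline{c(e_j,e_i)}\,T_jT_i$; the antisymmetry condition on $A(\theta)$ is set up precisely so that this phase equals $e^{2\pi i\theta_{ij}}$, giving the first half of~\eqref{eq:commutation}. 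The second half comes from (TCK3) applied to $s^c_\Ee(e_i)^*s^c_\Ee(e_j)$, whose unique minimal common extension is $(e_j,e_i)$, producing $T_i^*T_j = e^{-2\pi i\theta_{ij}}T_jT_i^*$, and hence~\eqref{eq:commutation} after adjoining. At the single vertex, the relation (CK) for $\Ee$ becomes $\prod_i(1 - T_iT_i^*) = 0$, which is exactly the sphere equation~\eqref{eq:sphere}. The universal property of $A_\theta(N+1)$ then delivers a homomorphism $\phi : A_\theta(N+1) \to C^*(\Lambda,c;\Ee)$ with $\phi(s_i)=T_i$.

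For the reverse map, I would define, for each $\mu\in\NN^{N+1}$, an element $S(\mu) := \omega_\mu\, s_0^{\mu_0}s_1^{\mu_1}\cdots s_N^{\mu_N}\in A_\theta(N+1)$, choosing the normalizing phases $\omega_\mu \in \TT$ so that the family $\{S(\mu)\}$ satisfies (TCK2) with twist $c$. Such phases exist because sorting an arbitrary product of the $s_i$ into the canonical order using \eqref{eq:commutation} introduces a bilinear phase in $(d(\mu),d(\nu))$, and by the very definition of $A(\theta)$ these sorting phases differ from $c(\mu,\nu)$ by a coboundary that is absorbed into $\omega_\mu$. The relations (TCK1), (TCK3), and (TCK4) follow from $s_i^*s_i=1$, from repeatedly applying $s_i^*s_j = e^{-2\pi i\theta_{ij}}s_js_i^*$ to bring $S(\mu)^*S(\nu)$ into the required form, and from the single-vertex triviality, respectively; the (CK) relation at $\Ee$ is the sphere equation~\eqref{eq:sphere}. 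Invoking the universal property of $C^*(\Lambda,c;\Ee)$ then yields an inverse homomorphism $\psi$, and since $\phi\circ\psi$ and $\psi\circ\phi$ fix the respective generating sets, they are the identity.

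The main obstacle I anticipate is the bookkeeping in the reverse direction: producing a consistent system of phases $\omega_\mu$ and then verifying (TCK3) by pushing $S(\mu)^*$ past $S(\nu)$ using only the two commutation relations. The cleanest way to sidestep this is to cite a universal presentation for twisted single-vertex higher-rank graph $C^*$-algebras from~\cite{SimsWhiteheadEtAl:xx13} (or \cite{BenThesis}), which already identifies $C^*(\Lambda,c;\Ee)$ as the universal $C^*$-algebra generated by $N+1$ isometries subject to~\eqref{eq:commutation} and~\eqref{eq:sphere}; once that is in hand, the isomorphism is immediate.
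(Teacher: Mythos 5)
Your proposal is correct and follows the same route as the paper, whose entire proof is the one-line assertion that $A_\theta(N+1)$ and $C^*(\Lambda,c;\Ee)$ have the same universal property; you have simply spelled out the verification (cocycle identity via bilinearity, matching \eqref{eq:commutation} and \eqref{eq:sphere} against (TCK1)--(TCK4) and (CK) at the single vertex, and mutually inverse maps from the two universal properties). The details you supply, including the phase computation $c(e_i,e_j)\overline{c(e_j,e_i)}=e^{2\pi i\theta_{ij}}$ and the identification of (CK) with the sphere equation, are accurate.
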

\begin{proof}
One checks that $A_\theta(N+1)$ and $C^*(\Lambda, c;\Ee)$ have the same universal
property.
\end{proof}

\begin{proof}[Proof of Theorem~\ref{prp:pullback isomorphism}]
The relations \eqref{eq:commutation}~and~\eqref{eq:sphere} are invariant under
multiplication of  the $s_i$ by any fixed $\lambda \in U(1)$. Thus the universal property
of $A_\theta(N+1)$ yields the desired $U(1)$-action.

The universal property of $\Tt^{N+1}_\theta$ yields a homomorphism
\begin{equation*}
\psi_\theta : \Tt^{N+1}_\theta \longrightarrow C(S^{2N+1}_{H,\theta})
\quad\text{ given by }\quad\psi_\theta(a) = (\sigma_0(a), \sigma_1(a),
\dots, \sigma_N(a)).
\end{equation*}
Applying Lemma~\ref{lem:quotient surjective} to $A = \Tt^{N+1}_\theta$ and the ideals
$I_i = \ker(\sigma_i)$ shows that
\begin{equation*}
C(S^{2N+1}_{H,\theta}) = \{(\sigma_0(a), \sigma_1(a), \dots, \sigma_N(a)) \mid a \in \Tt^{N+1}_\theta\},
\end{equation*}
so that $\psi_\theta$ is surjective. Since $\prod^N_{j=0} (1 - w_j w^*_j) \in
\ker\sigma_i$ for each $i$, it belongs to $\ker\psi_\theta$, so $\psi_\theta$ descends to
a surjective homomorphism $\phi_\theta : A_\theta(N+1) \to C(S^{2N+1}_{H,\theta})$ such
that $\phi_\theta(s_i) = \mathbf{s}_i$ for all $i$.

By Lemma~\ref{lem:k-graph alg} it suffices to show that the homomorphism
$$
\rho :C^*(\Lambda, c; \Ee) \longrightarrow C(S^{N+1}_{H,\theta})
$$
satisfying $\rho(s^c_\Ee(e_i)) =
\phi_\theta(s_i)
$
 is injective. For this, we aim to apply the gauge-invariant uniqueness
theorem \cite[Theorem~3.15]{SimsWhiteheadEtAl:xx13} for $C^*(\Lambda, c; \Ee)$.

The homomorphism $\rho$ is equivariant for the gauge actions on $C(S^{2N+1}_{H,\theta})$
and $C^*(\Lambda, c; \Ee)$. Since $s^c_\Ee(0)$ is the identity element of $C^*(\Lambda,
c; \Ee)$, we have
$$
\rho(s^c_\Ee(0)) = (1, 1, \dots, 1) \not= 0.
$$
 Hence, by
\cite[Theorem~3.15]{SimsWhiteheadEtAl:xx13}, it suffices to show that for each finite $F$
in the complement of the satiation $\overline{\Ee}$ of $\Ee$ (see
\cite[page~837]{SimsWhiteheadEtAl:xx13}),
$$
\rho\big(\prod_{\mu \in F} (s^c_\Ee(0) -
s^c_\Ee(\mu)s^c_\Ee(\mu)^*)\big) \not= 0.
$$

The set
\begin{multline*}
\Ee' := \{F \subset \Lambda \setminus \{0\} \mid\\ \text{ there exists $i > 0$ such that
    $|p| > i$ implies $p \ge q$ for some $q \in F$}\}
\end{multline*}
satisfies (S1)--(S4) on page~87 of \cite{SimsWhiteheadEtAl:xx13} and contains $\Ee$. An
induction shows that any set containing $\Ee$ and satisfying (S1)--(S4) contains $\Ee'$.
So $\Ee' = \overline{\Ee}$. So for a finite set $F \not\in \overline{\Ee}$, there is a
sequence $(p^i)$ in $\Lambda$ with $|p^i| \to \infty$ such that $p^i \not\ge q$ for all
$q \in F$ and all $i \in \NN$. By passing to a subsequence, we may assume that $p^i_j \to
\infty$ for some $j \le N$. Since $p^i \not\ge q$ for all $q \in F$ and all $i$, it
follows that $q \in F$ implies $q_l > 0$ for some $l \not= j$. Hence there exists $l
\not= j$ such that $q \ge e_l$, which forces
\begin{equation*}
s^c_{\Ee}(q)s^c_\Ee(q)^*  = s^c_\Ee(e_l) s^c_\Ee(q - e_l) s^c_\Ee(q - e_l)^* s^c_Ee(e_l)^* \le s^c_{\Ee}(e_l)s^c_\Ee(e_l)^*.
\end{equation*}
Thus
\begin{equation*}
\rho\big(1 - s^c_{\Ee}(q)s^c_\Ee(q)^*\big) \ge \rho\big(1 - s^c_{\Ee}(e_l)s^c_\Ee(e_l)^*\big) = 1 - \mathbf{s}_l \mathbf{s}^*_l.
\end{equation*}

Applying this reasoning to each $q \in F$, we obtain
\begin{equation*}
\rho\Big(\prod_{q \in F} (1 - s^c_\Ee(q) s^c_\Ee(q)^*)\Big) \ge \prod_{l \not= j} (1 - \mathbf{s}_l \mathbf{s}^*_l).
\end{equation*}

Since each $\mathbf{s}_l \in C(S^{2N+1}_{H,\theta}) \subseteq \bigoplus^N_{i=0} B_i$
(where $B_i = \Tt^{N+1}_\theta/I_i$), the $j$th coordinate of $\prod_{l \not= j} (1 -
\mathbf{s}_l \mathbf{s}^*_l)$ is
\begin{equation}\label{eq:product}
\Big(\prod_{l \not= j} (1 - \mathbf{s}_l \mathbf{s}^*_l)\Big)_j
    = \sigma_j\Big(\prod_{l \not= j} (1 - w_l w^*_l)\Big).
\end{equation}
So it suffices to show that the right-hand side of~\eqref{eq:product} is nonzero. Since
$\sigma_j(\Tt^{N+1}_\theta)$ is universal for the same relations as the twisted relative
Cuntz--Krieger algebra $C^*(\Lambda, c; \{e_j\})$, there is an isomorphism
$\sigma_j(\Tt^{N+1}_\theta) \to C^*(\Lambda, c; \{e_j\})$ that carries $\sigma_j(w_l)$ to
$s^c_{\{e_j\}}(e_l)$ for each~$l$. The satiation $\overline{\{e_j\}}$ of $\{e_j\}$ does
not contain the set $\{e_l \mid l \not= j\}$, so
\cite[Proposition~3.9]{SimsWhiteheadEtAl:xx13} implies that
\begin{equation*}
\prod_{l \not= j} (1 - s^c_{\{e_j\}}(e_l) s^c_{\{e_j\}}(e_l)^*) \not = 0,
\end{equation*}
giving $\sigma_j\Big(\prod_{l \not= j} (1 - w_l w^*_l)\Big) \not= 0$ as required. This
completes the proof that $\phi_\theta$ is an isomorphism.

Since each $B_i = \Tt^{N+1}_\theta/I_i$ and $B_{ij} = \Tt^{N+1}_\theta/(I_i + I_j)$ by
definition, the homomorphisms $\pi^i_j$ are distributive in the sense of
\cite[Definition~2]{HZ}. Lemma~\ref{lem:quotient surjective} shows in particular that
given distinct $i,j,k$ and elements $b_i \in B_i$ and $b_j \in B_j$ such that
$\pi^i_j(b_i) = \pi^j_i(b_j)$, there exists $b_k \in B_k$ such that $\pi^k_i(b_k) =
\pi^i_k(b_i)$ and $\pi^k_j(b_k) = \pi^j_k(b_j)$. Hence Theorem~1 of \cite{HZ} implies
that the $\pi^i_j$ satisfy the cocycle condition of Definition~\ref{def:cocycle}.
\end{proof}

\subsection{Strong connections}

Since we focus on free $U(1)$-actions on unital 
 $C^*$-algebras, we avoid the general
coalgebraic formalism of strong connections of~\cite{bh04}, and formulate the concept of
a strong connection from~\cite{h-pm96} solely for $U(1)$-actions on unital
$C^*$-algebras.

Let $A$ be a unital $C^*$-algebra carrying a $U(1)$-action. For $m \in \ZZ$, recall that
$A_m$ denotes the spectral subspace $\{a \in A \mid \lambda \cdot a = \lambda^m a\text{
for all } \lambda \in U(1)\}$. We write $\CC[u, u^*]$ for the $^*$-algebra of Laurent
polynomials. Let $\ell$ be a unital linear map
\begin{equation*}\textstyle
\ell:\CC[u, u^*] \longrightarrow \big(\bigoplus_{m \in \ZZ} A_m\big) \;
\mathbin{\underset{\mathrm{alg}}{\otimes}}\; \big(\bigoplus_{m \in \ZZ} A_m\big)\subseteq A\underset{\mathrm{alg}}{\otimes} A,
\end{equation*}
where $\bigoplus_{m \in \ZZ} A_m$ denotes the algebraic direct sum of the spectral
subspaces. We say that $\ell$ is a \emph{strong connection} for the $U(1)$-action on $A$
if, writing
$$
m_A : A \mathbin{\otimes_{\mathrm{alg}}} A \longrightarrow A
$$
 for the multiplication
map, we have
\begin{equation}\label{eq:m circ l}
(m_A \circ \ell)(h) = h(1)1_A\quad\text{ for all $h \in \CC[u, u^*]$},
\end{equation}
and
\begin{equation}\label{eq:degree}
\ell(u^n) \in A_{-n} \otimes A_n \quad\text{ for all $n \in \ZZ$.}
\end{equation}

By~\cite{u-kh81} the existence of a strong connection is equivalent to strong grading:
\begin{equation*}
A_m A_n=A_{m+n}\quad\text{for all } m,n\in\ZZ.
\end{equation*}
Moreover, by the main theorem of~\cite{bdh} combined with \cite[Theorem~2.5(1)]{bh04},
the existence of a strong connection is
equivalent to freeness.

\subsubsection{A strong connection on \texorpdfstring{$S^{2N+1}_{H,\theta}$}{the twisted quantum sphere}}\label{3.2}

In what follows, we will also need the following family of $U(1)$-fixed elements of
$C(S^{2N+1}_{H,\theta})$:
\begin{equation*}
H_N=1,\quad H_i=\prod_{j=i+1}^N(1-\mathbf{s}_j\mathbf{s}_j^*),\quad i\in\{0,\ldots,N-1\}.
\end{equation*}

Consider the linear map
\begin{equation*}\textstyle
\ell: \CC[u, u^*] \longrightarrow \big(\bigoplus_{m \in \ZZ} C(S^{2N+1}_{H,\theta})_m\big)
    \;\mathbin{\underset{\mathrm{alg}}{\otimes}}\; \big(\bigoplus_{m \in \ZZ} C(S^{2N+1}_{H,\theta})_m\big)
\end{equation*}

\noindent defined inductively as follows:
\begin{gather}
\ell(1) := 1 \otimes 1,\qquad \ell(u^n) = \mathbf{s}^{*n}_0 \otimes \mathbf{s}^n_0\text{ for $n > 0$,\quad and}\nonumber\\
\ell(u^{n-1}):=\sum_{0\leq k\leq N}\Big((\mathbf{s}_k \otimes 1)\ell(u^n)(1 \otimes
\mathbf{s}^*_kH_{k})\Big)\text{ for $n \le 0$.}\label{mystrong2}
\end{gather}
Then $\ell$ is a strong connection for the $U(1)$-action on $C(S^{2N+1}_{H,\theta})$:
Equation~\eqref{eq:degree} for $n \ge 0$ is trivial, and for $n < 0$ follows from an
elementary induction argument. Equation~\eqref{eq:m circ l} for $n \ge 0$ is trivial
because $\mathbf{s}_0$ is an isometry. To check it for $n < 0$, we first use the sphere
equation~\eqref{prodprop} to see that $\sum_{k=0}^N \mathbf{s}_k \mathbf{s}_k^* H_k = 1$,
and then employ a straightforward induction argument (see the proof of
\cite[Lemma~4.2]{hms06}) using the recursive formula~\eqref{mystrong2}.

\section{Twisted multipullback quantum complex projective spaces}\label{sec:projective}
\noindent
Our twisted multipullback quantum odd sphere $C^*$-algebras (see
Definition~\ref{dfn:twisted sphere}) yield a natural construction of a family of
$\theta$-twisted complex projective space $C^*$-algebras as fixed-point algebras. Using
the $U(1)$-action $\alpha$ on $C(S^{2N+1}_{H, \theta})$ from
equation~\eqref{eq:alphadef}, we define
\begin{equation*}
    C(\mathbb{P}^N_\theta(\Tt)) := C(S^{2N+1}_{H, \theta})^\alpha .
\end{equation*}
To study $C(S^{2N+1}_{H,\theta})^\alpha$, we gauge the diagonal action $\alpha$ on
$C(S^{2N+1}_{H,\theta})$ to an action on a single twisted component, where it is easy to
determine the $U(1)$-invariant subalgebra. As in Section~\ref{sec:twisted spheres},
restricting to the diagonal subgroup of $U(1)^{N+1}$ yields a diagonal action on
$\Tt^{N+1}_\theta$ given by $\lambda\cdot w_j^\theta := (\lambda,\ldots,\lambda)\cdot
w_j^\theta = \lambda w_j^\theta$. We can also compose with the coordinate inclusions
$U(1) \hookrightarrow U(1)^{N+1}$ to obtain actions $\cdot_i$ of $U(1)$ given by
\begin{equation*}
\lambda \cdot_i w_j=\begin{cases}
w_j&\text{if $i\neq j$}\\
\lambda w_i&\text{if $i=j$.}
\end{cases}
\end{equation*}
Since that gauge action descends to the quotients by the $I^\theta_k$ and
$I^\theta_{kj}$, so do these $U(1)$-actions. We will consider $B^\theta_i$ and
$B^\theta_{ij}$ to be endowed with the diagonal $U(1)$-action and we denote by
$B^{\theta;R_k}_i$ and $B^{\theta;R_k}_{ij}$ the same $C^*$-algebras endowed with the
$U(1)$-action on the $k$-th twisted component. Accordingly, we will write the generators
of $B^{\theta;R_k}_i$ and $B^{\theta;R_k}_{ij}$ as $w_l^{\theta;i;R_k}$ and
$w_l^{\theta;ij;R_k}$, respectively.

\begin{lem}
For any $(N+1)\times(N+1)$ antisymmetric real matrix $\theta$ and $0\leq i\leq N$, define
antisymmetric real matrices $\kappa_i(\theta)$ and $\kappa^{-1}_i(\theta)$ of the same
size by
\begin{gather}
\kappa_i(\theta)_{jk}:=\theta_{ij}+\theta_{jk}+\theta_{ki}, \ \text{if\ }j,k\neq i,\quad
\kappa_i(\theta)_{ij}:=\theta_{ij},\nonumber\\
\kappa_i^{-1}(\theta)_{jk}:=-\theta_{ij}+\theta_{jk}-\theta_{ki} \ \text{if\ }jk\neq
i,\quad \kappa_i^{-1}(\theta)_{ik}:=\theta_{ik}.\label{eq:kappa}
\end{gather}
Then
    $\kappa_i^{-1}(\kappa_i(\theta))=\theta=\kappa_i(\kappa_i^{-1}(\theta))$,
 and there exists a $U(1)$-equivariant $C^*$-isomorphism
$
\kappa_i:B^\theta_i\rightarrow B^{\kappa_i(\theta);R_i}_i
$
such that
\begin{gather}
\kappa_i(w_k^{\theta;i}) :=w_k^{\kappa_i(\theta);i;R_i}w_i^{\kappa_i(\theta);i;R_i} \
\text{if\ }i\neq k,\quad \kappa_i(w_i^{\theta;i}):=w_i^{\kappa_i(\theta);i;R_i},
\nonumber\\
\kappa_i^{-1}(w_k^{\kappa_i(\theta);i;R_i}):=w_k^{\theta;i}(w_i^{\theta;i})^* \
\text{if\ }i\neq k,\quad \kappa_i^{-1}(w_i^{\kappa_i(\theta);i;R_i}):=w_i^{\theta;i}. \label{it:C* kappas}
\end{gather}
\end{lem}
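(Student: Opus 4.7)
The identities $\kappa_i^{-1}(\kappa_i(\theta))=\theta=\kappa_i(\kappa_i^{-1}(\theta))$ are a matter of substituting the definitions into each matrix entry. For $j,k\neq i$, one gets $\kappa_i^{-1}(\kappa_i(\theta))_{jk}=-\kappa_i(\theta)_{ij}+\kappa_i(\theta)_{jk}-\kappa_i(\theta)_{ki}=-\theta_{ij}+(\theta_{ij}+\theta_{jk}+\theta_{ki})-\theta_{ki}=\theta_{jk}$, while the entries involving $i$ are fixed by definition. The other composition is symmetric.

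To construct $\kappa_i$ I would use the universal property of $B_i^\theta=\Tt^{N+1}_\theta/I_i^\theta$: it is the universal unital $C^*$-algebra generated by isometries $x_0,\ldots,x_N$ in which $x_i$ is moreover unitary and $x_jx_k=e^{2\pi i\theta_{jk}}x_kx_j$, $x_jx_k^*=e^{-2\pi i\theta_{jk}}x_k^*x_j$ hold for $j\neq k$. Inside $B_i^{\kappa_i(\theta);R_i}$ I would then exhibit a tuple meeting these relations by setting $x_i:=w_i^{\kappa_i(\theta);i;R_i}$ and $x_k:=w_k^{\kappa_i(\theta);i;R_i}w_i^{\kappa_i(\theta);i;R_i}$ for $k\neq i$. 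The element $x_i$ is unitary because we have quotiented by $1-w_iw_i^*$, and each $x_k$ with $k\neq i$ is an isometry since $x_k^*x_k=w_i^*w_k^*w_kw_i=w_i^*w_i=1$, using that $w_k$ is an isometry and $w_i$ a unitary.

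The main computational step is verifying the twisted commutation relations. For $j,k\neq i$ with $j\neq k$, expanding $x_jx_k=w_jw_iw_kw_i$ and commuting $w_i$ past $w_k$ via $w_iw_k=e^{2\pi i\kappa_i(\theta)_{ik}}w_kw_i=e^{2\pi i\theta_{ik}}w_kw_i$ (since $\kappa_i(\theta)_{ik}=\theta_{ik}$), then applying $w_jw_k=e^{2\pi i(\theta_{ij}+\theta_{jk}+\theta_{ki})}w_kw_j$, and comparing with the analogous expansion of $x_kx_j$, the antisymmetry cancellations $\theta_{ij}+\theta_{ji}=0$ and $\theta_{ik}+\theta_{ki}=0$ leave exactly the phase $e^{2\pi i\theta_{jk}}$. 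The $*$-relation follows similarly, simplified by $x_jx_k^*=w_jw_iw_i^*w_k^*=w_jw_k^*$. When $j$ or $k$ equals $i$, the relations reduce directly because $\kappa_i(\theta)$ has the same row and column $i$ as $\theta$.

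To produce $\kappa_i^{-1}$ I would run the mirror-image argument using the universal property of $B_i^{\kappa_i(\theta);R_i}$, taking $y_i:=w_i^{\theta;i}$ and $y_k:=w_k^{\theta;i}(w_i^{\theta;i})^*$ inside $B_i^\theta$; the isometry check is $y_k^*y_k=w_iw_k^*w_kw_i^*=w_iw_i^*=1$, and the phase bookkeeping, driven by the first displayed matrix identity, now produces relations governed by $\kappa_i(\theta)$. Composing on generators then gives $\kappa_i^{-1}(\kappa_i(w_k^{\theta;i}))=w_k^{\theta;i}(w_i^{\theta;i})^*w_i^{\theta;i}=w_k^{\theta;i}$ and $\kappa_i(\kappa_i^{-1}(w_k^{\kappa_i(\theta);i;R_i}))=w_k^{\kappa_i(\theta);i;R_i}w_i^{\kappa_i(\theta);i;R_i}(w_i^{\kappa_i(\theta);i;R_i})^*=w_k^{\kappa_i(\theta);i;R_i}$, confirming that $\kappa_i$ is a $C^*$-isomorphism. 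Finally, $U(1)$-equivariance is transparent from the formulas: the diagonal action scales every $w_k^{\theta;i}$ by $\lambda$, while under the $R_i$-action only the $w_i$-factor of $w_k^{\kappa_i(\theta);i;R_i}w_i^{\kappa_i(\theta);i;R_i}$ transforms, scaling the product by the same $\lambda$. The only genuine obstacle is the tracking of phases in the relation checks; the rest is routine once the universal property of $B_i^\theta$ is articulated.
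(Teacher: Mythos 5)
Your proposal is correct and follows essentially the same route as the paper: both verify the matrix identities by direct substitution, invoke the universal property of $B_i^\theta$ (as the quotient of $\Tt^{N+1}_\theta$ making $w_i$ unitary) to reduce existence of $\kappa_i$ and $\kappa_i^{-1}$ to checking the isometry and twisted commutation relations on the proposed generator images, carry out the same phase bookkeeping using $\kappa_i(\theta)_{ik}=\theta_{ik}$ and antisymmetry, and conclude mutual inverseness from unitarity of $w_i^{\theta;i}$ and $w_i^{\kappa_i(\theta);i;R_i}$ together with evaluation on generators. The paper simply writes out the six relation checks in full where you sketch the cancellations, but the argument is the same.
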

\begin{proof}
The equalities $\kappa_i^{-1}(\kappa_i(\theta))=\theta=\kappa_i(\kappa_i^{-1}(\theta))$
follow from elementary calculations using~\eqref{eq:kappa}. To see that~\eqref{it:C*
kappas} defines  *-homomorphisms, note  that, by the universal property of $\Tt^\theta$
and the definition of $I^\theta_{i}$,  it suffices to check that the elements
$\kappa_i(w^{\theta;i}_k)$ and $\kappa_i^{-1}(w^{\kappa_i(\theta);i;R_i}_k)$ satisfy
respectively the relations that determine $B^\theta_i$  and $B^{\kappa_i(\theta);R_i}_i$.
 Let $i,j,k$ be all distinct (the cases where $k=i$ or $j=i$ are trivial).
\begin{enumerate}
\item Since $(w_k^{\theta;i})^*w_k^{\theta;i}=1$, we must have
    $\kappa_i((w_k^{\theta;i})^*w_k^{\theta;i})=1$. Furthermore,
    \begin{multline*}
    \kappa_i((w_k^{\theta;i})^*w_k^{\theta;i})
        =\kappa_i(w_k^{\theta;i})^*\kappa_i(w_k^{\theta;i})\\
        =(w_i^{\kappa_i(\theta);i;R_i})^*(w_k^{\kappa_i(\theta);i;R_i})^* w_k^{\kappa_i(\theta);i;R_i}w_i^{\kappa_i(\theta);i;R_i}
        =1.
    \end{multline*}
\item Since $(w_k^{\kappa_i(\theta);i;R_i})^*w_k^{\kappa_i(\theta);i;R_i}=1$, we must
    have
    $$\kappa_i^{-1}\bigl((w_k^{\kappa_i(\theta);i;R_i})^*w_k^{\kappa_i(\theta);i;R_i}\bigr)
    =1.$$ Furthermore,
    \begin{multline*}
        \kappa_i^{-1}\bigl((w_k^{\kappa_i(\theta);i;R_i})^*w_k^{\kappa_i(\theta);i;R_i}\bigr)
        =\kappa_i^{-1}(w_k^{\kappa_i(\theta);i;R_i})^*\kappa_i^{-1}(w_k^{\kappa_i(\theta);i;R_i})\\
        =w_i^{\theta;i}(w_k^{\theta;i})^* w_k^{\theta;i}(w_i^{\theta;i})^*
        =1
    \end{multline*}
\item Since $w_{j}^{\theta;i} w_{k}^{\theta;i} = e^{2\pi i \theta_{jk}}
    w_{k}^{\theta;i} w_{j}^{\theta;i}$, we must have $$\kappa_i(w_{j}^{\theta;i}
    w_{k}^{\theta;i}) = e^{2\pi i \theta_{jk}} \kappa_i(w_{k}^{\theta;i}
    w_{j}^{\theta;i}).$$ Furthermore,
\begin{align*}
\kappa_i(w_{j}^{\theta;i} w_{k}^{\theta;i})&=
\kappa_i(w_{j}^{\theta;i}) \kappa_i(w_{k}^{\theta;i})\\
&=w_j^{\kappa_i(\theta);i;R_i}w_i^{\kappa_i(\theta);i;R_i}
w_k^{\kappa_i(\theta);i;R_i}w_i^{\kappa_i(\theta);i;R_i}\\
&=e^{2\pi i\kappa_i(\theta)_{ik}}
w_j^{\kappa_i(\theta);i;R_i}
w_k^{\kappa_i(\theta);i;R_i}
w_i^{\kappa_i(\theta);i;R_i}
w_i^{\kappa_i(\theta);i;R_i}\\
&=e^{2\pi i\bigl(\kappa_i(\theta)_{ik}+\kappa_i(\theta)_{jk}\bigr)}
w_k^{\kappa_i(\theta);i;R_i}
w_j^{\kappa_i(\theta);i;R_i}
w_i^{\kappa_i(\theta);i;R_i}
w_i^{\kappa_i(\theta);i;R_i}\\
&=e^{2\pi i\bigl(\kappa_i(\theta)_{ik}+\kappa_i(\theta)_{jk}
+\kappa_i(\theta)_{ji}\bigr)}
w_k^{\kappa_i(\theta);i;R_i}
w_i^{\kappa_i(\theta);i;R_i}
w_j^{\kappa_i(\theta);i;R_i}
w_i^{\kappa_i(\theta);i;R_i}\\
&=e^{2\pi i\bigl(\kappa_i(\theta)_{ik}+\kappa_i(\theta)_{jk}
+\kappa_i(\theta)_{ji}\bigr)}
\kappa_i(w_{k}^{\theta;i})\kappa_i(w_{j}^{\theta;i})\\
&=e^{2\pi i\bigl(\kappa_i(\theta)_{ik}+\kappa_i(\theta)_{jk}
+\kappa_i(\theta)_{ji}\bigr)}
\kappa_i(w_{k}^{\theta;i}w_{j}^{\theta;i}).
\end{align*}
It remains to show  that $\theta_{jk} = \kappa_i(\theta)_{ik} + \kappa_i(\theta)_{jk}
+ \kappa_i(\theta)_{ji}$. Since $\kappa_i(\theta)_{ik}=\theta_{ik}$ and
$\kappa_i(\theta)_{ij}=\theta_{ij}$, we have
$\theta_{jk}=\theta_{ik}+\kappa_i(\theta)_{jk}+\theta_{ji}$, so
$\kappa_i(\theta)_{jk}=\theta_{ij} + \theta_{jk} +\theta_{ki}$ by antisymmetry of
$\theta$.

\item Since $(w_{j}^{\theta;i})^* w_{k}^{\theta;i} = e^{-2\pi i \theta_{jk}}
    w_{k}^{\theta;i} (w_{j}^{\theta;i})^*$, we must have
$$\kappa_i\bigl((w_{j}^{\theta;i})^* w_{k}^{\theta;i}\bigr)
 = e^{-2\pi i \theta_{jk}}
 \kappa_i\bigl(w_{k}^{\theta;i} (w_{j}^{\theta;i})^*\bigr).$$
 Furthermore,
 \begin{align*}
 &\kappa_i\bigl((w_{j}^{\theta;i})^* w_{k}^{\theta;i}\bigr)\\
 &=\kappa_i(w_{j}^{\theta;i})^* \kappa_i(w_{k}^{\theta;i})\\
 &=
 (w_i^{\kappa_i(\theta);i;R_i})^*
 (w_j^{\kappa_i(\theta);i;R_i})^*
 w_k^{\kappa_i(\theta);i;R_i}
 w_i^{\kappa_i(\theta);i;R_i}\\
 &=e^{2\pi i\bigl(-\kappa_i(\theta){jk} - \kappa_i(\theta)_{ji}
 -\kappa_i(\theta)_{ik}\bigr)}
  w_k^{\kappa_i(\theta);i;R_i}
  (w_i^{\kappa_i(\theta);i;R_i})^*
 w_i^{\kappa_i(\theta);i;R_i}
 (w_j^{\kappa_i(\theta);i;R_i})^*\\
 &=e^{2\pi i\bigl(-\kappa_i(\theta){jk} - \kappa_i(\theta)_{ji}
 -\kappa_i(\theta)_{ik}\bigr)}
  w_k^{\kappa_i(\theta);i;R_i}
  w_i^{\kappa_i(\theta);i;R_i}
  (w_i^{\kappa_i(\theta);i;R_i})^*
 (w_j^{\kappa_i(\theta);i;R_i})^*\\
 &=e^{2\pi i\bigl(-\kappa_i(\theta){jk} - \kappa_i(\theta)_{ji}
 -\kappa_i(\theta)_{ik}\bigr)}
 \kappa_i(w_{k}^{\theta;i})\kappa_i(w_{j}^{\theta;i})^*\\
 &=e^{2\pi i\bigl(-\kappa_i(\theta){jk} - \kappa_i(\theta)_{ji}
 -\kappa_i(\theta)_{ik}\bigr)}
 \kappa_i\bigl(w_{k}^{\theta;i}(w_{j}^{\theta;i})^*\bigr)\\
 &=e^{-2\pi i \theta_{jk}}
 \kappa_i\bigl(w_{k}^{\theta;i} (w_{j}^{\theta;i})^*\bigr).
 \end{align*}
 \item Since $w_{j}^{\kappa_i(\theta);i;R_i} w_{k}^{\kappa_i(\theta);i;R_i} =
     e^{2\pi i \kappa_i(\theta)_{jk}} w_{k}^{\kappa_i(\theta);i;R_i}
     w_{j}^{\kappa_i(\theta);i;R_i}$, we must have
$$\kappa_i^{-1}(w_{j}^{\kappa_i(\theta);i;R_i} w_{k}^{\kappa_i(\theta);i;R_i})
= e^{2\pi i \kappa_i(\theta)_{jk}}
\kappa_i^{-1}(w_{k}^{\kappa_i(\theta);i;R_i}
w_{j}^{\kappa_i(\theta);i;R_i}).$$ Furthermore,
\begin{align*}
\kappa_i^{-1}(w_{j}^{\kappa_i(\theta);i;R_i} w_{k}^{\kappa_i(\theta);i;R_i})
&= \kappa_i^{-1}(w_{j}^{\kappa_i(\theta);i;R_i})
\kappa_i^{-1}( w_{k}^{\kappa_i(\theta);i;R_i})\\
&=w_j^{\theta;i}(w_i^{\theta;i})^*
w_k^{\theta;i}(w_i^{\theta;i})^* \\
&=e^{2\pi i\bigl(-\theta_{ik} + \theta_{jk} + \theta_{ij}\bigr)}
w_k^{\theta;i}
(w_i^{\theta;i})^*
w_j^{\theta;i}
(w_i^{\theta;i})^*\\
&=e^{2\pi i\bigl( \theta_{ij} + \theta_{jk} + \theta_{ki} \bigr)}
\kappa_i^{-1}(w_{k}^{\kappa_i(\theta);i;R_i})
\kappa_i^{-1}(w_{j}^{\kappa_i(\theta);i;R_i})\\
&=e^{2\pi i \kappa_i(\theta)_{jk}}
\kappa_i^{-1}(w_{k}^{\kappa_i(\theta);i;R_i}
w_{j}^{\kappa_i(\theta);i;R_i}).
\end{align*}
\item Since $(w_{j}^{\kappa_i(\theta);i;R_i})^* w_{k}^{\kappa_i(\theta);i;R_i} =
    e^{2\pi i -\kappa_i(\theta)_{jk}} w_{k}^{\kappa_i(\theta);i;R_i}
    (w_{j}^{\kappa_i(\theta);i;R_i})^*$, we see that
$$\kappa_i^{-1}\bigl((w_{j}^{\kappa_i(\theta);i;R_i})^*
w_{k}^{\kappa_i(\theta);i;R_i}\bigr) = e^{-2\pi i \kappa_i(\theta)_{jk}}
\kappa_i^{-1}\bigl(w_{k}^{\kappa_i(\theta);i;R_i}
(w_{j}^{\kappa_i(\theta);i;R_i})^*\bigr).$$ Furthermore,
\begin{align*}
\kappa_i^{-1}\bigl((w_{j}^{\kappa_i(\theta);i;R_i})^*
w_{k}^{\kappa_i(\theta);i;R_i}\bigr)
& = \kappa_i^{-1}(w_{j}^{\kappa_i(\theta);i;R_i})^*
\kappa^{-1}_i(w_{k}^{\kappa_i(\theta);i;R_i})\\
&=
w_i^{\theta;i}
(w_j^{\theta;i})^*
w_k^{\theta;i}
(w_i^{\theta;i})^*\\
&=e^{2\pi i\bigl(-\theta_{jk} + \theta_{ik} + \theta_{ji} \bigr)}
w_k^{\theta;i}
(w_i^{\theta;i})^*
w_i^{\theta;i}
(w_j^{\theta;i})^*\\
&=e^{2\pi i\bigl(- \theta_{ij} -\theta_{jk} - \theta_{ki} \bigr)}
\kappa_i^{-1}(w_{k}^{\kappa_i(\theta);i;R_i})^*
\kappa^{-1}_i(w_{j}^{\kappa_i(\theta);i;R_i})\\
&=e^{-2\pi i \kappa_i(\theta)_{jk}} \kappa_i^{-1}\bigl(w_{k}^{\kappa_i(\theta);i;R_i}
(w_{j}^{\kappa_i(\theta);i;R_i})^*\bigr).
\end{align*}
\end{enumerate}
Thus we have shown that $\kappa_i$ and $\kappa_i^{-1}$ are well defined
$^*$-homomorphisms. They are evidently $U(1)$-equivariant. Since $w_i^{\theta;i}$ and
$w_{i}^{\kappa_i(\theta);i;R_i}$ are unitaries, $\kappa_i$ and $\kappa_i^{-1}$ are
mutually inverse.
\end{proof}

The maps $\kappa_i, \kappa^{-1}_i$ descend to the $B^\theta_{ij}$ because they fix the
generator
$$
\sigma_i\big(1 - w^\theta_j (w^\theta_j)^*\big)
$$
of $\sigma_i(I^\theta_j)$. It follows that $\kappa_i$ induces an invertible
$U(1)$-equivariant $C^*$-isomorphism\linebreak
\mbox{$\kappa_{i;j}:B^\theta_{ij}\rightarrow B^{\kappa_i(\theta);R_i}_{ij}$} such that
\begin{gather*}
\kappa_{i;j}(w_k^{\theta;ij}) = w_k^{\kappa_i(\theta);ij;R_i}
w_i^{\kappa_i(\theta);ij;R_i} \ \text{if\ }i\neq k,\quad \kappa_{i;j}(w_i^{\theta;ij}) =
w_i^{\kappa_i(\theta);ij;R_i},\\
\kappa_{i;j}^{-1}(w_k^{\kappa_i(\theta);ij;R_i}) = w_k^{\theta;ij}(w_i^{\theta;ij})^* \
\text{if\ }i\neq k,\quad \kappa_{i;j}^{-1}(w_i^{\kappa_i(\theta);ij;R_i}) =
w_i^{\theta;ij}.
\end{gather*}

So we obtain maps $\hat{\sigma}^i_j : B_i^{\kappa_i(\theta);R_i} \to
B_{ij}^{\kappa_i(\theta);R_i}$ from the commuting diagram
\begin{equation*}
\xymatrix{
B^{\kappa_i(\theta);R_i}_i\ar[r]^-{\kappa_i^{-1}}
\ar@/_2em/[rrr]_{\hat\sigma^i_j}
&B^\theta_i
\ar[r]^{\pi^i_j} & B^\theta_{ij}
\ar[r]^-{\kappa_{i;j}}& B^{\kappa_i(\theta);R_i}_{ij}.
}
\end{equation*}
For any $0\leq k\leq N$, we have $\hat\sigma^i_j(w^{\kappa_i(\theta);i;R_i}_k) =
w^{\kappa_i(\theta);ij;R_i}_k$.

\subsection{The multipullback structure of
\texorpdfstring{$C(S^{2N+1}_{H,\theta})^R$}{the gauged twisted sphere algebra}}

We define the twisted Heegaard sphere $C^*$-algebra $C(S^{2N+1}_{H,\theta})^R$ to be the
image of $C(S^{2N+1}_{H,\theta})$ under $\prod_{i=0}^N\kappa_i$. We compute morphisms
$\hat\pi^i_j$ that assemble the $B_i^{\kappa_i(\theta);R_i}$ into the multipullback
$C^*$-algebra $C(S^{2N+1}_{H,\theta})^R$. Fix any $i<j$. We determine $\hat\pi^i_j$ and
$\hat\pi^j_i$ through the commutative diagram
\begin{equation*}
\xymatrix{
B^{\kappa_i(\theta);R_i}_i\ar[d]_{\kappa_i^{-1}}
\ar@{-->}[rrd]^{\hat\pi^i_j}
&&&&
B^{\kappa_j(\theta);R_j}_j\ar[d]^{\kappa_j^{-1}}
\ar@{-->}[lld]_{\hat\pi^j_i}
\\
B^\theta_i \ar[r]_-{\pi^i_j} & B^\theta_{ij}\ar[r]_-{\kappa_{i;j}} & B^{\kappa_i(\theta);R_i}_{ij}
& B^\theta_{ij}\ar[l]^-{\kappa_{i;j}} & B^\theta_i\ar[l]^-{\pi^j_i}
}.
\end{equation*}
Then $C(S^{2N+1}_{H,\theta})^R$ is equivariantly isomorphic to the multipullback
$C^*$-algebra over the $\hat \pi^i_j$. Note that the above diagram can be rewritten as
follows:
\begin{equation}
\xymatrix{
B^{\kappa_i(\theta);R_i}_i \ar[dd]^{\hat\sigma^i_j}
\ar@/_1em/@{-->}[dd]_{\hat\pi^i_j}
&&
B^{\kappa_j(\theta);R_j}_j\ar[dd]^{\hat\sigma^j_i}
\ar@{-->}[lldd]_{\hat\pi^j_i}
\\
&&\\
B^{\kappa_i(\theta);R_i}_{ij}&
B^\theta_{ij}\ar[l]_-{\kappa_{i;j}}&
B^{\kappa_j(\theta);R_j}_{ij}\ar[l]_-{\kappa_{j;i}^{-1}}
\ar@/^1em/@{-->}[ll]^{\hat\psi_{ij}}
}.
\end{equation}
Thus, for $i<j$, we have $\hat\pi^i_j:=\hat\sigma^i_j$ and
$\hat\pi^j_i:=\hat\psi_{ij}\circ\hat\sigma^i_j$, where
$\hat\psi_{ij}:=\kappa_{i;j}\circ\kappa_{j;i}^{-1}$. We compute the images of the
generators of $B^{\kappa_j(\theta);R_j}_{ij}$ under the $\hat\psi_{ij}$: for $i<j$ and
$k\neq i,j$,
\begin{subequations}
\label{HatPsiDef}
\begin{align*}
\hat\psi_{ij}(w^{\kappa_j(\theta);ij;R_j}_k) &:=
\kappa_{i;j}\bigl(\kappa_{j;i}^{-1}(w^{\kappa_j(\theta);ij;R_j}_k)\bigr)\\
&=\kappa_{i;j}\bigl(w^{\theta;ij}_k(w^{\theta;ij}_j)^*\bigr)\\
&=\kappa_{i;j}(w^{\theta;ij}_k)\kappa_{i;j}(w^{\theta;ij}_j)^*\\
&=w_k^{\kappa_i(\theta);ij;R_i}w_i^{\kappa_i(\theta);ij;R_i}
(w_i^{\kappa_i(\theta);ij;R_i})^*
(w_j^{\kappa_i(\theta);ij;R_i})^*\\
&=w_k^{\kappa_i(\theta);ij;R_i}
(w_j^{\kappa_i(\theta);ij;R_i})^*,\\
\hat\psi_{ij}(w^{\kappa_j(\theta);ij;R_j}_i) &:=
\kappa_{i;j}\bigl(\kappa_{j;i}^{-1}(w^{\kappa_j(\theta);ij;R_j}_i)\bigr)\\
&=\kappa_{i;j}\bigl(w^{\theta;ij}_i(w^{\theta;ij}_j)^*\bigr)\\
&=w_i^{\kappa_i(\theta);ij;R_i}
(w_i^{\kappa_i(\theta);ij;R_i})^*
(w_j^{\kappa_i(\theta);ij;R_i})^*\\
&=(w_j^{\kappa_i(\theta);ij;R_i})^*,\quad\text{ and}\\
\hat\psi_{ij}(w^{\kappa_j(\theta);ij;R_j}_j) &:=
\kappa_{i;j}\bigl(\kappa_{j;i}^{-1}(w^{\kappa_j(\theta);ij;R_j}_j)\bigr)\\
&=\kappa_{i;j}(w^{\theta;ij}_j)\\
&=w_j^{\kappa_i(\theta);ij;R_i}w_i^{\kappa_i(\theta);ij;R_i}.
\end{align*}
\end{subequations}

\subsection{The \texorpdfstring{$U(1)$}{U(1)}-fixed-point subalgebra of \texorpdfstring{$C(S^{2N+1}_{H,\theta})^{R}$}{of the twisted sphere algebra} as a multipullback}

For any antisymmetric $(N+1)\times (N+1)$ real matrix $\theta$, let us denote by
$\check\kappa_i(\theta)$ the matrix obtained from $\kappa_i(\theta)$ by removing the
$i$-th row and column. Re-index the remaining elements so that both row and column indices
run from $1$ to $N$.

For any  $0\leq i\leq N$, let $A_i:=\Tt_{\check\kappa_i(\theta)}^N$. The isometries
$v_1^i,\ldots,v_N^i$ generating $A_i$ satisfy
\begin{equation*}
v^i_jv^i_k=e^{2\pi i\check\kappa_i(\theta)_{jk}}v^i_k v^i_j,\quad
(v^i_j)^*v^i_k=e^{-2\pi i\check\kappa_i(\theta)_{jk}}v^i_k (v^i_j)^*,
\end{equation*}
for all $1\leq j,k\leq N$, $j\neq k$.

We claim that $A_i$ is isomorphic as a $C^*$-algebra with the $U(1)$-invariant subalgebra
of $B^{\kappa_i(\theta);R_i}_i$. To see this, observe that the universal property of
$A_i$ yields a $C^*$-homomorphism $\phi_i : A_i \to B^{\kappa_i(\theta);R_i}_i$ such that
\begin{equation*}
\phi_{i}(v^i_k)=\begin{cases}
w^{\kappa_i(\theta);i;R_i}_{k-1}& \text{if}\ k\leq i\\
w^{\kappa_i(\theta);i;R_i}_{k} & \text{if}\ k > i.
\end{cases}
\end{equation*}
An argument using the gauge-invariant uniqueness theorem as in the proof of
Theorem~\ref{prp:pullback isomorphism} shows that $\phi_i$ is injective. To see that it
is surjective, first observe that $B^{\kappa_i(\theta);R_i}_i$ is densely spanned by
elements of the form
\begin{equation*}
(w^{\kappa_i(\theta);i;R_i}_{1})^{n_1} \cdots (w^{\kappa_i(\theta);i;R_i}_{N})^{n_N}
(w^{\kappa_i(\theta);i;R_i}_{N})^{*m_N}
\cdots (w^{\kappa_i(\theta);i;R_i}_{1})^{*m_1}.
\end{equation*}
 Since $w^{\kappa_i(\theta);i;R_i}_{i}$ is unitary in $B^{\kappa_i(\theta);R_i}$,
the expectation onto the $U(1)$-invariant subalgebra of
$B^{\kappa_i(\theta);R_i}_i$, obtained by averaging over the $U(1)$-action, takes such a
spanning element to
\begin{equation*}
 \delta_{n_i, m_i} \prod_{\substack{j=0\\ j \not= i}}^N(w^{\kappa_i(\theta);i;R_i}_{j})^{n_j} \prod_{\substack{k=N\\ k \not=i}}^0
(w^{\kappa_i(\theta);i;R_i}_{k})^{*n_k}.
\end{equation*}
Therefore, the $U(1)$-invariant subalgebra of $B^{\kappa_i(\theta);R_i}_i$ is spanned by
elements of this form, and such elements are in the range of $\phi_i$. Hence $\phi_i$ is
surjective. For any $i\neq j$ we will denote the generators of $A_{i;j}$ (which are the
images under the canonical quotient maps of the generators of $A_i$) by $v_1^{i;j},\ldots
v_N^{i;j}$. For $i<j$, the elements $v_j^{i;j}\in A_{i;j}$ and $v_{i+1}^{j;i}\in A_{j;i}$
are unitary. The inverse of $\phi_i$ satisfies
\begin{equation*}
\phi_i^{-1}(w_k^{\kappa_i(\theta);i;R_i})
    = \begin{cases}
        v^i_{k+1} &\text{if\ }k<i\\
        v^i_k &\text{if\ }i<k.
    \end{cases}
\end{equation*}

Let $J_j$ be the ideal of $A_i$ generated by $(1-v^i_j(v^i_j)^*)$. For $0\leq i< j\leq
N$, let
\begin{equation*}
A_{i;j}:=A_i/J_j,\quad\text{ and }\quad A_{j;i}:=A_j/J_{i+1}.
\end{equation*}

The isomorphisms $\phi_i^{-1}$ descend to isomorphisms
\begin{equation*}
\phi_{ij}^{-1} : (B^{\kappa_i(\theta);R_i}_{ij})^{U(1)}\rightarrow A_{i;j},
\quad
w_k^{\kappa_i(\theta);ij;R_i}\mapsto \begin{cases}
v^{i;j}_{k+1} &\text{if\ }k<i\\
v^{i;j}_k &\text{if\ }i<k.
\end{cases}
\end{equation*}

Using the isomorphisms $\phi_i$ and $\phi_{ij}$ we can transport the multipullback
structure of the $U(1)$-fixed-point subalgebra of $C(S^{2N+1}_{H,\theta})^{R}$ as follows ($0\leq i<j\leq N$):
\begin{equation}\label{eq:transported multipullback}
\xymatrix{
A_i \ar@{-->}@/_1em/[ddr]_{\rho^i_j}
\ar[r]^-{\phi_i}& \bigl(B_i^{\kappa_i(\theta);R_i}\bigr)^{U(1)}
\ar[d]_{\hat\sigma^i_j}
&& \bigl(B_j^{\kappa_j(\theta);R_j}\bigr)^{U(1)}
\ar[d]^{\hat\sigma^j_i}
& A_j\ar[l]_-{\phi_j}
\ar@{-->}@/^1em/[ddl]^{\rho^j_i}
\\
& \bigl(B_{ij}^{\kappa_i(\theta);R_i}\bigr)^{U(1)}
\ar[d]_{\phi_{ij}^{-1}}
 &&
\bigl(B_{ij}^{\kappa_j(\theta);R_j}\bigr)^{U(1)}
\ar[d]^{\phi_{ji}^{-1}}
\ar[ll]_-{\hat\psi_{ij}} &\\
& A_{i;j} && A_{j;i}\ar@{-->}[ll]^-{\psi_{ij}}
}.
\end{equation}
In the diagram~\eqref{eq:transported multipullback}, we have used the same symbols to
denote the (co)-restrictions of the  maps $\hat\sigma^i_j$, $\hat\sigma^j_i$
and $\hat\psi_{ij}$ to the respective $U(1)$-invariant subalgebras. Since all these maps are
$U(1)$-equivariant, the restrictions corestrict as expected.

We will now explicitly write the values of maps $\rho^i_j$, $\rho^j_i$, $\psi_{ij}$,
$0\leq i<j\leq N$, defined by the commutative diagram above, on generators of respective
domains. It is straightforward to verify that $\rho^i_j$ and $\rho^j_i$ are the canonical
quotient maps given by
 \begin{equation*}
 \rho^i_j(v_k^i)=v^{i;j}_k,\quad\text{ and }\quad
 \rho^j_i(v_k^j)=v^{j;i}_k,\quad 1\leq k\leq N.
 \end{equation*}
 In case of the isomorphisms
 $\psi_{ij}:=\phi_{ij}^{-1}\circ\hat\psi_{ij}\circ\phi_{ji}$,
  $0\leq i<j\leq N$,
  we will perform a careful case-by-case analysis. The first splitting into cases
  follows from the definition of $\hat\psi_{ij}$ (see~\eqref{HatPsiDef}):
 either $k=i+1$
  or $k\neq i+1$.
  \begin{enumerate}
  \item For $k=i+1$:
  \begin{align*}
  \psi_{ij}(v_{i+1}^{j;i})&=\phi_{ij}^{-1}\Bigl(\hat\psi_{ij}\bigl(\phi_{ji}(v_{i+1}^{j;i}
  )\bigr)\Bigr)\\
  &=\phi_{ij}^{-1}\Bigl(\hat\psi_{ij}\bigl(w_i^{\kappa_j(\theta);ij;R_j}\bigr)\Bigr)\\
  &=\phi_{ij}^{-1}\Bigl(\bigl(w_j^{\kappa_i(\theta);ij;R_i}\bigr)^*\Bigr)\\
  &=(v^{i;j}_j)^*.
  \end{align*}
  \item For $k\neq i+1$:
  $
  \psi_{ij}(v_{k}^{j;i})=\phi_{ij}^{-1}\Bigl(\hat\psi_{ij}\bigl(\phi_{ji}(v_{k}^{j;i}
  )\bigr)\Bigr)=:(*)$.
  Here the definition of $\phi_{ji}$ forces a split into cases  $k>j$ or $k\leq j$.
  \begin{enumerate}
  \item For $k>j$:
  \begin{align*}
  (*)&=\phi_{ij}^{-1}\Bigl(\hat\psi_{ij}\bigl(w_k^{\kappa_j(\theta);ij;R_j}\bigr)\Bigr)\\
  &=\phi_{ij}^{-1}\Bigl(
  w_k^{\kappa_i(\theta);ij;R_i}\bigl(w_j^{\kappa_i(\theta);ij;R_i}\bigr)^*
  \Bigr)\\
  &=v_k^{i;j}(v_j^{i;j})^*.
  \end{align*}
  \item For $k\leq j$:
  \begin{align*}
  (*)&=\phi_{ij}^{-1}\Bigl(\hat\psi_{ij}\bigl(w_{k-1}^{\kappa_j(\theta);ij;R_j}\bigr)\Bigr)
  \\
  &=\phi_{ij}^{-1}\Bigl(
  w_{k-1}^{\kappa_i(\theta);ij;R_i}\bigl(w_j^{\kappa_i(\theta);ij;R_i}\bigr)^*
  \Bigr)\\
  &=:(**).
  \end{align*}
  Now we arrive at another split into cases:  $k-1>i$ or $k-1<i$. (The case
  $k-1=i$ was taken care of previously.)
  \begin{enumerate}
  \item If $k-1>i$, then $(**)=v_{k-1}^{i;j}(v_j^{i;j})^*$.
  \item If $k-1<i$, then $(**)=v_{k}^{i;j}(v_j^{i;j})^*$.
  \end{enumerate}
  \end{enumerate}
  \end{enumerate}

Summarizing, when $0\leq i<j\leq N$ and $1\leq k\leq N$,  we obtain
\begin{equation*}
\psi_{ij}(v_k^{j;i})=\begin{cases}
v_j^{i;j} &\text{if}\ k=i+1\\
v_k^{i;j}(v_j^{i;j})^*&\text{if}\ k>j \text{ or } k<i+1\\
v_{k-1}^{i;j}(v_j^{i;j})^*&\text{if}\ i+1<k\leq j
\end{cases}.
\end{equation*}
Consequently, the $U(1)$-fixed-point subalgebra of $C(S^{2N+1}_{H,\theta})^R$ is isomorphic to the multipullback of the algebras $A_i$
with respect to the natural maps $A_i\to A_{i;j}$, $A_j \to A_{i;j}$, $i < j$, determined by the
diagrams
\begin{equation*}
\xymatrix{
A_i\ar[d]_{\rho^i_j} & A_j\ar[d]^{\rho^j_i}\\
A_{i;j} & A_{j;i}.\ar[l]^{\psi_{ij}}
}
\end{equation*}

\section{The \texorpdfstring{$K$}{K}-groups of twisted multipullback quantum odd spheres and complex projective
spaces}\label{sec:PSpace Kth}


\noindent
We begin by deriving a short exact sequence of commutative $C^*$-algebras whose noncommutative counterpart
provides a basis for computing the $K$-groups of the twisted multipullback quantum complex projective spaces.

The $2N+1$-dimensional sphere $S^{2N+1}$ is the closed subset of $\mathbb{C}^{N+1}$
defined by
 \begin{equation*}
 S^{2N+1}=\Big\{(z_0,\ldots,z_N)\in \mathbb{C}^{N+1}\;\Big|\;
 \textstyle{\sum_{i=0}^N}|z_i|^2=1\Big\}.
 \end{equation*}
Denote by $D:=\{c \in\mathbb{C}\;|\;|c|\leq 1\}$ the unit disk, and by  $D_0:=\{c
\in\mathbb{C}\;|\;|c|< 1\}$ the interior of the unit disk. Next, we define a ``non-round" odd sphere as follows:
 \begin{equation*}
 S_D^{2N+1}:=\Big\{(c_0,\ldots,c_N)\in D^{N+1}\;\Big|\;\textstyle{\prod_{i=0}^N}(1-|c_i|^2)=0\Big\}.
 \end{equation*}
Since $\prod_{i=0}^N(1-|c_i|^2)=0$ if and only if $|c_i|=1$ for some
$i\in\{0,\ldots,N\}$, it follows that $\sum_{i=0}^N|c_i|^2\geq 1$ for any
$(c_0,\ldots,c_N)\in S_D^{2N+1}$. Also, $\sum_{i=0}^N|z_i|^2=1$ gives
$$
\max\{|z_0|,\ldots,|z_N|\}\geq\frac{1}{\sqrt{N+1}}.
$$
 Hence there are well-defined maps
\begin{gather*}
 S_D^{2N+1}\ni (c_j)_{j=0}^N\longmapsto \left(
 \frac{c_j}{\sqrt{\sum_{i=0}^N|c_i|^2}}\right)^N_{j=0}\in S^{2N+1},\\
 S^{2N+1}\ni (z_j)^N_{j=0}\longmapsto \left(
 \frac{z_j}{\max\{|z_0|,\ldots,|z_N|\}}
 \right)^N_{j=0} \in  S_D^{2N+1}.
 \end{gather*}
These maps are mutually inverse and continuous, so that $S^{2N+1} \cong S_D^{2N+1}$.

Now consider the following splitting of $S_D^{2N+1}$ into a pair
of disjoint sets which are closed and open respectively:
\begin{equation*}
S_D^{2N+1}=\{(c_i)_i\in S_D^{2N+1}\;|\; |c_N|=1\}\;\textstyle{\coprod}\;
\{(c_i)_i\in S_D^{2N+1}\;|\; |c_N|<1\}.
\end{equation*}
The condition in the first of these sets forces $\prod_{i=0}^N(1-|c_i|^2)=0$ regardless
of the values of $(c_0,\ldots,c_{N-1})\in D^N$. Hence
 \begin{equation*}
 \{(c_i)_i\in S_D^{2N+1}\;|\; |c_N|=1\}=D^N\times S^1.
 \end{equation*}
 Furthermore, when $(c_i)_{i=0}^N$ is an element  of the second set, then
 $\prod_{i=0}^{N-1}(1-|c_i|^2)=0$ because $1-|c_N|^2>0$. Consequently,
 \begin{equation*}
 \{(c_i)_i\in S_D^{2N+1}\;|\; |c_N|<1\}=S_D^{2N-1}\times D_0.
 \end{equation*}
 Summarizing, we obtain the decomposition
 \begin{equation*}
 S_D^{2N+1}=\big(D^N\times S^1\big)\;\textstyle{\coprod}\;\big(S_D^{2N-1}\times D_0\big).
 \end{equation*}
For the diagonal actions of~$U(1)$, this decomposition of $S_D^{2N+1}$ induces the $U(1)$-equivariant short exact sequence
\begin{equation*}
\xymatrix{
0\ar[r]&C_0(S_D^{2N-1}\times D_0)\ar[r]&
C(S_D^{2N+1})\ar[r]&C(D^N\times S^1)\ar[r]
&0}
\end{equation*}
of $C^*$-algebras.
Finally, remembering that $S_D^{2N-1}$ and $S^{2N-1}$ are equivariantly
homeomorphic for the diagonal $U(1)$-actions, and using standard identifications,
 we obtain the following $U(1)$-equivariant short exact sequence of $C^*$-algebras:
 \begin{equation}
 \label{short-classical}
\xymatrix{
0\ar[r]&C(S^{2N-1})\otimes C_0(D_0)\ar[r]&
C(S^{2N+1})\ar[r]&C(D)^{\otimes N}\otimes C(S^1)\ar[r]
&0
}.
\end{equation}

\subsection{Quantum odd spheres}

Recall that $s$ denotes the isometry generating the Toeplitz algebra $\Tt$. The universal
properties of the maximal tensor product and of the untwisted algebra $\Tt^{N+1}_0$ show
that the map
\begin{equation} \label{eq:Tiso}
\Tt^{N+1}_0 \ni w_{j} \longmapsto 1^{\otimes j} \otimes s \otimes 1^{\otimes N-j} \in \Tt^{\otimes N+1}
\end{equation}
is an isomorphism.

To see where Definition~\ref{dfn:twisted sphere} comes from, and how it relates to
noncommutative solid tori, recall first that $\sigma$ denotes the symbol map from $\Tt$ to
$C(\mathbb{T})$. When $\theta = 0$, we denote $C(S^{2N+1}_{H, \theta})$ by
$C(S^{2N+1}_H)$. We have $\Tt_0^{N+1} = \Tt^{\otimes N+1}$, and
 each $I_i$ of Definition~\ref{dfn:twisted sphere} is precisely the kernel of
\begin{equation}\label{sigmai}
\id^{\otimes i} \otimes \sigma \otimes
\id^{\otimes N-i} :  \Tt^{\otimes N+1} \longrightarrow
B_i:=\Tt^{\otimes i}\otimes C(\mathbb{T})\otimes \Tt^{\otimes {N-i}} ,
\end{equation}
and so each $B_i$ is the noncommutative solid torus algebra $\Tt^{\otimes i} \otimes
C(\mathbb{T}) \otimes \Tt^{\otimes N-i}$. The algebras $B_i$ and $B_{ij}$ and the maps
$\pi^i_j$ of Definition~\ref{dfn:twisted sphere} are then given by
\begin{gather}
 B_{ij}:=\Tt^{\otimes i}\otimes C(\mathbb{T})\otimes\Tt^{\otimes{j-i-1}}\otimes C(\mathbb{T})
\otimes\Tt^{\otimes{N-j}},\quad
i<j,\quad i,j\in\{0,1,\ldots,N \},\nonumber\\
 B_{ij}:=B_{ji},\quad j<i,\quad i,j\in\{0,1,\ldots,N \},\quad\text{ and} \nonumber\\
\label{piij}
\pi^i_j := \id^j\otimes\sigma\otimes\id^{N-j} : B_i\rightarrow B_{ij},\quad i\neq j,\quad
i,j\in\{0,1,\ldots,N\}.
\end{gather}
\noindent
Thus our definition of $C(S^{2N+1}_{H, \theta})$ as the multipullback along the $\pi^i_j$
is a natural noncommutative dual to the Heegaard-type splitting of $S^{2N+1}$ described
in Section~\ref{sec:Hsplittings}.

To compute $K_*(C(S^{2N+1}_{H, \theta}))$, we first compute the $K$-theory of the
untwisted quantum sphere $C(S^{2N+1}_H)$ by applying the K\"unneth theorem and then the
six-term ideal-quotient exact sequence. We then apply results of
\cite{SimsWhiteheadEtAl:xx13} to see that the $K$-theory of $C(S^{2N+1}_{H,\theta})$ is
identical to that of~$C(S^{2N+1}_H)$. Since the cocycle $c$ on $\Lambda$ in
Lemma~\ref{lem:k-graph alg} is induced by a group cocycle on $\ZZ^k$, the corresponding
twisted multiplication on $C^*(\Lambda; \mathcal{E})$ can be realised using Rieffel's
framework of twisted multiplicative structures on $C^*$-algebras arising from actions of
$\RR^k$ applied to the gauge action of $\TT^k$ on $C^*(\Lambda; \Ee)$ and the dense
$^*$-subalgebra $\operatorname{span}\{s_\mu s^*_\nu : \mu,\nu \in \Lambda\}$. So we could
alternatively apply \cite[Main Theorem (page~200)]{Rieffel} to prove that the $K$-theory
of $C(S^{2N+1}_{H,\theta})$ is identical to that of~$C(S^{2N+1}_H)$.

Recall that $\Tt^{N+1}_0$ is canonically isomorphic to $\Tt^{\otimes N+1}$ via the map
that carries the generator $w_i$ of $\Tt^{N+1}_0$ to the elementary tensor $1 \otimes
\cdots \otimes 1 \otimes s \otimes 1 \otimes \cdots \otimes 1$, where the $s$ appears in
the $i$th (counting from zero) tensor factor. Recall also that we have $K_0(\Tt) = \ZZ$
and $K_1(\Tt) = 0$ with the generator in $K_0$ being the class of the identity element.
It then follows from the K\"unneth theorem (see, e.g., \cite[Remarks~9.3.3]{Wegge-Olsen})
that $K_0(\Tt^{N+1}_0) = \ZZ[1]$ and $K_1(\Tt^{N+1}_0) = 0$. Given $m = (m_0, m_1, \dots,
m_N) \in \ZZ^{N+1}$, we write $W_m$ for the element $\prod^N_{i=0} w_i^{m_i}$ of
$\Tt^{N+1}_0$. (By convention, $w_i^{-k} = (w_i^*)^k$ for $k \ge 0$.)

\begin{lem} \label{lem:inclusion}
For $N \ge 0$, there is an isomorphism of $\Kk(\ell^2(\NN^{N+1}))$ onto the ideal $I$ of
$\Tt^{N+1}_0$ generated by $\prod^N_{j=0} (1 - w_j w^*_j)$ that carries the matrix unit
$E_{pq}$ to
\begin{equation*}
W_p \Big(\prod^N_{j=0} (1 - w_j w^*_j)\Big) W_q^* .
\end{equation*}
\end{lem}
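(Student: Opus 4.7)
The plan is to exploit the tensor-product structure via the isomorphism $\Tt^{N+1}_0 \cong \Tt^{\otimes N+1}$ of \eqref{eq:Tiso}, together with the standard fact that $\Kk(\ell^2(\NN)) \subseteq \Tt$ is the ideal generated by the rank-one projection $p_0 := 1 - s s^*$. In the standard faithful representation of $\Tt$ on $\ell^2(\NN)$, $p_0$ is the projection onto $\CC e_0$. Hence, under the induced faithful representation of $\Tt^{\otimes N+1}$ on $\ell^2(\NN)^{\otimes N+1} \cong \ell^2(\NN^{N+1})$, the element $P := \prod_{j=0}^N (1 - w_j w_j^*)$ is identified with $p_0^{\otimes N+1}$, namely the rank-one projection onto $\CC\, e_0^{\otimes N+1}$.

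Next, I would verify directly that the elements $E_{pq} := W_p P W_q^*$, for $p,q \in \NN^{N+1}$, form a system of matrix units indexed by $\NN^{N+1}$. The crucial Toeplitz identities $s^* p_0 = 0 = p_0 s$ yield the factorwise computation $p_0 s^{*q_i} s^{p'_i} p_0 = \delta_{q_i, p'_i}\, p_0$, which assembles across tensor factors into
\begin{equation*}
P\, W_q^*\, W_{p'}\, P \;=\; \delta_{q, p'}\, P.
\end{equation*}
This immediately gives $E_{pq} E_{p'q'} = \delta_{q, p'}\, E_{p q'}$, while $E_{pq}^* = E_{qp}$ is automatic. Since $\Kk(\ell^2(\NN^{N+1}))$ is an AF algebra whose finite-dimensional matrix subalgebras are simple, a standard argument extends the $*$-algebraic map $E_{pq} \mapsto W_p P W_q^*$ on finite-rank operators to an injective $*$-homomorphism $\phi : \Kk(\ell^2(\NN^{N+1})) \to \Tt^{N+1}_0$; the required nonvanishing $\phi(E_{00}) = P \ne 0$ holds because $P$ is a rank-one projection in the faithful representation.

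Finally, I would identify the image of $\phi$ with the closed two-sided ideal $I$. The inclusion $\phi(\Kk) \subseteq I$ is immediate from $P = \phi(E_{00})$. For the reverse inclusion, observe that $\Tt$ is densely spanned by $\{s^a s^{*b} : a, b \in \NN\}$, so $\Tt^{\otimes N+1}$ is densely spanned by $\{W_a W_b^* : a, b \in \NN^{N+1}\}$. Thus $I$ is the closed linear span of products of the form $W_a W_b^* \cdot P \cdot W_c W_d^* = W_a \bigl(W_b^* P W_c\bigr) W_d^*$. The same factorwise reasoning as above shows $W_b^* P W_c = \delta_{b, 0}\, \delta_{c, 0}\, P$, so every such product equals $W_a P W_d^* = \phi(E_{ad})$ or vanishes. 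The only ``obstacle'' is careful bookkeeping with the Toeplitz relations, but it all reduces to the elementary identities $s^* p_0 = 0$ and $p_0 s = 0$, which render $P$ absorbing from the left and right by $W$-monomials of nonzero exponent.
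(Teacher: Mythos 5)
Your proposal is correct and follows essentially the same route as the paper: exhibit $\{W_p P W_q^*\}_{p,q\in\NN^{N+1}}$ as a system of matrix units via the absorption identities $w_i^* P = 0 = P w_i$, invoke simplicity of $\Kk(\ell^2(\NN^{N+1}))$ together with $P\neq 0$ for injectivity, and identify the image with the ideal generated by $P$. Your surjectivity step (reducing a general product $W_aW_b^*\,P\,W_cW_d^*$ to $\delta_{b,0}\delta_{c,0}\,W_aPW_d^*$ over the dense spanning set $\{W_aW_b^*\}$) is in fact spelled out more completely than the paper's rather terse closing display, but the underlying mechanism is identical.
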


\begin{proof}
Let $ R := \prod^N_{j=0} (1 - w_j w^*_j) $. As the $w_i$ are commuting isometries, we see
that $w_i^* R = 0 = R w_i$ for all $i$, and then we deduce that $W^*_p R = 0 = R W_p$ for
all $p \in \NN^{N+1}\setminus\{0\}$. Similarly, observe that
\begin{equation*}
(W_p R W^*_q)(W_a R W^*_b) = (W_p R) W_q^* W_a R W_b^* = \delta_{q,a} w_p R w^*_b.
\end{equation*}
Since $(W_p R W^*_q)^* = W_q R W^*_p$, we see that the $W_p R W^*_q$ form a family of
matrix units indexed by $\NN^{N+1}$, and so there is a homomorphism of
$\Kk(\ell^2(\NN^{N+1})) \to I$ carrying each $E_{pq}$ to $W_p R W^*_q$. Since $R$ is
nonzero, and since $\Kk(\ell^2(\NN^{N+1}))$ is simple, this homomorphism is injective.
Surjectivity follows from
\begin{equation*}
\textstyle\prod^N_{j=0} (1 - w_j w^*_j) =  (1 - w_0 w^*_0 ) R = R - w_0 R w_0^*. \qedhere
\end{equation*}
\end{proof}

The following result generalizes \cite[Theorem~4.1]{bhms05} and \cite[Theorem~3.2]{hr}.
It also contains statement~\eqref{it:main-sphere-Kth} of Theorem~\ref{thm:main}, so the
proof of this theorem also proves Theorem~\ref{thm:main}\eqref{it:main-sphere-Kth}.
\begin{thm}
Consider an integer $N \ge 1$ and an antisymmetric matrix $\theta \in M_{N+1}(\RR)$. Then
$K_1(C(S^{2N+1}_{H,\theta})) \cong \ZZ$ and there is an isomorphism
$K_0(C(S^{2N+1}_{H,\theta}))$ $\cong \ZZ$ that carries $[1_{C(S^{2N+1}_{H,\theta})}]$ to~$1$.
\end{thm}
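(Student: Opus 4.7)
My plan is to follow precisely the strategy flagged by the paper's authors: first do the untwisted case via a short exact sequence with compacts on the left, then appeal to the twisted $k$-graph machinery of \cite{SimsWhiteheadEtAl:xx13} to transfer the result to arbitrary antisymmetric $\theta$.

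\textbf{Untwisted case.} By Theorem~\ref{prp:pullback isomorphism}, $C(S^{2N+1}_H)\cong A_0(N+1)$, which is the quotient of $\Tt^{N+1}_0\cong\Tt^{\otimes N+1}$ by the ideal $I$ generated by the sphere relation element $R:=\prod_{j=0}^N(1-w_jw_j^*)$. Lemma~\ref{lem:inclusion} identifies $I$ with $\Kk(\ell^2(\NN^{N+1}))$, so we obtain a short exact sequence
\begin{equation*}
0\longrightarrow \Kk\stackrel{\iota}{\longrightarrow}\Tt^{\otimes N+1}\longrightarrow C(S^{2N+1}_H)\longrightarrow 0.
\end{equation*}
Since $K_0(\Tt)=\ZZ[1]$ and $K_1(\Tt)=0$, the Künneth formula (torsion-freeness keeps the $\operatorname{Tor}$-term trivial) yields $K_0(\Tt^{\otimes N+1})=\ZZ[1]$ and $K_1(\Tt^{\otimes N+1})=0$.

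\textbf{The key computation.} The generator of $K_0(\Kk)=\ZZ$ maps under $\iota_*$ to $[R]=[1-ss^*]^{\otimes(N+1)}$ in $K_0(\Tt^{\otimes N+1})$ (via the identification coming from Lemma~\ref{lem:inclusion} and Künneth). Since $s^*s=1$ in $\Tt$ forces $[1-ss^*]=[1]-[ss^*]=[1]-[s^*s]=0$ in $K_0(\Tt)$, we conclude $\iota_*=0$. The six-term exact sequence then reads
\begin{equation*}
\xymatrix@C=1.2em@R=1.2em{
\ZZ\ar[r]^-{0} & \ZZ\ar[r] & K_0(C(S^{2N+1}_H))\ar[r] & 0\ar[d]\\
K_1(C(S^{2N+1}_H))\ar[u] & 0\ar[l] & 0\ar[l] &
}
\end{equation*}
and so $K_0(C(S^{2N+1}_H))\cong\ZZ$ with the quotient map carrying $[1]$ to $[1]$, while the exponential boundary gives $K_1(C(S^{2N+1}_H))\cong K_0(\Kk)\cong\ZZ$.

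\textbf{Twisted case.} By Lemma~\ref{lem:k-graph alg}, $C(S^{2N+1}_{H,\theta})\cong C^*(\Lambda,c_\theta;\Ee)$ where $c_\theta$ is the cocycle in~\eqref{eq:cdef} and $\Ee=\{\{e_0,\dots,e_N\}\}$. The assignment $\theta\mapsto c_\theta$ is a continuous path of $2$-cocycles joining $c_\theta$ to $c_0\equiv 1$, and the relative Cuntz--Krieger relations are unaffected by this deformation. Invoking the homotopy-invariance of $K$-theory for twisted relative higher-rank graph $C^*$-algebras established in \cite{SimsWhiteheadEtAl:xx13}, we obtain an isomorphism $K_*(C^*(\Lambda,c_\theta;\Ee))\cong K_*(C^*(\Lambda,c_0;\Ee))=K_*(C(S^{2N+1}_H))$ that is induced by (and compatible with) the inclusion of the common unit, hence sends $[1]$ to $[1]$.

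\textbf{Main obstacle.} The analytic content of the untwisted computation is all concentrated in Lemma~\ref{lem:inclusion} and in the vanishing $[1-ss^*]=0$, both of which are routine. The genuine technical point is the last step: extracting from \cite{SimsWhiteheadEtAl:xx13} the precise homotopy-invariance statement for the relative CK algebras that applies here, and verifying that the induced $K_0$-isomorphism preserves the class of the unit. If a direct homotopy-invariance theorem is unavailable in the desired generality, the fall-back is to re-run the $\Kk$-ideal argument in the twisted setting: since $1-w_iw_i^*$ still commutes with every $w_j$ (the twisting phases cancel in $w_iw_i^*$), the matrix-unit calculation of Lemma~\ref{lem:inclusion} goes through verbatim for $\Tt^{N+1}_\theta$, reducing the problem to computing $K_*(\Tt^{N+1}_\theta)$; the latter is a noncommutative torus-type algebra whose $K$-theory is well known to be $\theta$-independent (one generator, $[1]$, in degree zero).
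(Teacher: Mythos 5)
Your proof is correct and follows essentially the same route as the paper: the untwisted case via Lemma~\ref{lem:inclusion}, the vanishing of $[1-ss^*]$ in $K_0(\Tt)$, K\"unneth, and the six-term sequence, and the twisted case via the $K$-theory invariance of \cite[Theorem~6.1]{SimsWhiteheadEtAl:xx13} for cocycles exponentiated from $\RR$-valued ones (which is exactly the "homotopy-invariance" statement you were looking for, and it does preserve $[1]$). Your fallback for the twisted case is not needed, and its last step (computing $K_*(\Tt^{N+1}_\theta)$ directly) is less routine than you suggest, but the primary argument stands on its own.
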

\begin{proof}
We first consider the case where $\theta_{ij} = 0$ for all $i,j$.
Theorem~\ref{prp:pullback isomorphism} combined with Lemma~\ref{lem:inclusion} and the
isomorphism $\Tt^{N+1}_0 \cong \Tt^{\otimes N+1}$ given in~\eqref{eq:Tiso} implies that
\begin{equation}
\label{TbyKEq}
C(S^{2N+1}_H) \; \cong \; \Tt^{N+1}_0/I \; \cong \; \Tt^{\otimes N+1} / \Kk(\ell^2(\NN^{N+1})) .
\end{equation}

We claim that the inclusion $\iota : \Kk(\ell^2(\NN^{N+1})) \to \Tt^{N+1}_0$ of
Lemma~\ref{lem:inclusion} induces the zero map on $K$-theory. As
$K_0(\Kk(\ell^2(\NN^{N+1}))) \cong \ZZ$ is generated by $[R]$, we just have to show that
$[R] = 0$ in $K_0 ( \Tt^{N+1}_0 ) \cong \ZZ$. The isomorphism $\Tt^{N+1}_0 \cong
\Tt^{\otimes N+1}$ given by~\eqref{eq:Tiso} carries $R$ to $(1 - ss^*) \otimes (1 - ss^*)
\otimes \cdots \otimes (1 - ss^*)$. Since $s$ is an isometry, we have $[1 - ss^*] = [s^*s
- ss^*] = 0$ in $K_0(\Tt)$. As $K_1 ( \Tt ) = 0$, the K\"unneth isomorphism implies that
$[(1 - ss^*) \otimes (1 - ss^*) \otimes \cdots \otimes (1 - ss^*)] = 0$ in
$K_0(\Tt^{\otimes N+1})$. Therefore $[R]$ is zero in $K_0(\Tt^{N+1}_0)$ as claimed.

Since $K_1( \Kk(\ell^2(\NN^{N+1})) ) = 0 = K_1 (\Tt^{N+1}_0)$, Theorem~9.3.2 of
\cite{Wegge-Olsen} gives an exact sequence
\begin{equation*}
\begin{tikzpicture}
	 \node (k0j) at (0,2) {$\ZZ$};
	 \node (k0t) at (3,2) {$\ZZ$};
	 \node (k0s) at (6,2) {$K_0(C(S^{2N+1}_H))$};
	 \node (k1j) at (6,0) {$\phantom{.}0$.};
	 \node (k1t) at (3,0) {$0$};
	 \node (k1s) at (0,0) {$K_0(C(S^{2N+1}_H))$};
	 \draw[-stealth] (k0j) to node[pos=0.5, above] {\small$0$} (k0t);
	 \draw[-stealth] (k0t) to (k0s);
	 \draw[-stealth] (k0s) to (k1j);
	 \draw[-stealth] (k1j) to (k1t);
	 \draw[-stealth] (k1t) to (k1s);
	 \draw[-stealth] (k1s) to (k0j);
\end{tikzpicture}
\end{equation*}
Hence $K_0(C(S^{2N+1}_H)) = \ZZ[1]$ and $K_1(C(S^{2N+1}_H)) \cong \ZZ$.

For general $\theta$, we have $C(S^{2N+1}_{H,\theta}) \cong C^*(\Lambda, c; \Ee)$ by
Lemma~\ref{lem:k-graph alg}. By~\eqref{eq:cdef}, the cocycle $c$ on $\Lambda$ arises from
exponentiation of an $\RR$-valued cocycle. Hence
\cite[Theorem~6.1]{SimsWhiteheadEtAl:xx13} gives
\begin{equation*}
K_*( C(S^{2N+1}_{H,\theta}) ) \; \cong  \;  K_*( C^*(\Lambda, c; \Ee) ) \; \cong \;
  K_*( C^*(\Lambda, 1 ; \Ee) ) \; \cong \; K_* ( C(S^{2N+1}_{H}) )
\end{equation*}
via isomorphisms that preserve the $K_0$-class of the identity.
\end{proof}
\begin{rmk}An alternative proof can be obtained using the exact sequence~\eqref{short-exact-main}.
\end{rmk}

\subsection{Multipullback quantum complex projective spaces}\label{untwisted}

In our computation of the $K$-theory of $C(\PTt)$, we will use two auxiliary results. The
first result is a quantum version of the short exact sequence~\eqref{short-classical}:
\begin{lem}
With respect to the diagonal $U(1)$-action, for any positive integer $k$, there exists a
$U(1)$-equivariant short exact sequence of $C^*$-algebras
\begin{equation}
\label{short-exact-main}
\xymatrix{
0\ar[r]&C(S^{2k-1}_H)\otimes \Kk\ar[r]&C(S^{2k+1}_H)
\ar[r]& \Tt^{\otimes k}\otimes C(S^1)\ar[r]& 0.
}
\end{equation}
\end{lem}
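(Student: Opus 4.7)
The plan is to derive the sequence by dualizing the decomposition $S_D^{2k+1} = (D^k \times S^1) \sqcup (S_D^{2k-1} \times D_0)$ used to obtain \eqref{short-classical}, with the roles of the open and closed pieces played at the $C^*$-algebra level by $\mathcal{T}^{\otimes k}\otimes\mathcal{K}$ and $\mathcal{T}^{\otimes k}\otimes C(\mathbb{T})$, respectively. The main input is the identification
\[
C(S^{2N+1}_H) \;\cong\; \Tt^{\otimes(N+1)}/\Kk^{\otimes(N+1)}
\]
obtained by combining \eqref{TbyKEq} and \eqref{eq:Tiso}, under which the generator $\mathbf{s}_i$ corresponds to the elementary tensor with $s$ in the $i$-th slot. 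Throughout, the diagonal $U(1)$-action is the one induced by $\lambda\cdot s=\lambda s$ on each Toeplitz factor and $\lambda\cdot u=\lambda u$ on the circle factor, and every map in sight is equivariant for these actions.

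The first step is to start from the short exact sequence
\[
0 \longrightarrow \Kk \longrightarrow \Tt \stackrel{\sigma}{\longrightarrow} C(\mathbb{T}) \longrightarrow 0,
\]
and tensor it on the left by $\Tt^{\otimes k}$. Since $\Tt$ (and hence $\Tt^{\otimes k}$) is nuclear, this yields the $U(1)$-equivariant exact sequence
\[
0 \longrightarrow \Tt^{\otimes k}\otimes\Kk \longrightarrow \Tt^{\otimes(k+1)} \stackrel{\mathrm{id}^{\otimes k}\otimes\sigma}{\longrightarrow} \Tt^{\otimes k}\otimes C(\mathbb{T}) \longrightarrow 0.
\]
Because $\Kk^{\otimes(k+1)} = \Kk^{\otimes k}\otimes\Kk \subseteq \Tt^{\otimes k}\otimes\Kk$, the surjection $\mathrm{id}^{\otimes k}\otimes\sigma$ descends to a $U(1)$-equivariant surjection
\[
C(S^{2k+1}_H) \;=\; \Tt^{\otimes(k+1)}/\Kk^{\otimes(k+1)} \;\longrightarrow\; \Tt^{\otimes k}\otimes C(\mathbb{T}),
\]
whose kernel is $(\Tt^{\otimes k}\otimes\Kk)/(\Kk^{\otimes k}\otimes\Kk)$.

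The second step is to identify this kernel. Applying the same identification with $N$ replaced by $k-1$, we have the $U(1)$-equivariant exact sequence
\[
0 \longrightarrow \Kk^{\otimes k} \longrightarrow \Tt^{\otimes k} \longrightarrow C(S^{2k-1}_H) \longrightarrow 0.
\]
Tensoring on the right by the nuclear $C^*$-algebra $\Kk$ preserves exactness and equivariance, giving
\[
(\Tt^{\otimes k}\otimes\Kk)/(\Kk^{\otimes k}\otimes\Kk) \;\cong\; C(S^{2k-1}_H)\otimes\Kk.
\]
Splicing this identification into the previous step yields the desired $U(1)$-equivariant exact sequence \eqref{short-exact-main}.

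I do not expect any serious obstacle: the argument is a routine two-step diagram chase built from the Toeplitz extension, the identification $C(S^{2N+1}_H)\cong\Tt^{\otimes(N+1)}/\Kk^{\otimes(N+1)}$, and nuclearity. The only mildly delicate point is bookkeeping — making sure the diagonal $U(1)$-action is preserved at each stage and that the natural inclusion $\Kk^{\otimes(k+1)}\subseteq\Tt^{\otimes k}\otimes\Kk$ is set up consistently with the factorisation used in Lemma~\ref{lem:inclusion} and \eqref{eq:Tiso}.
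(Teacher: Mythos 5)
Your proof is correct and follows essentially the same route as the paper's: tensor the Toeplitz extension by $\Tt^{\otimes k}$, quotient throughout by $\Kk^{\otimes(k+1)}$, and identify the resulting kernel as $C(S^{2k-1}_H)\otimes\Kk$ via \eqref{TbyKEq} and exactness of tensoring with~$\Kk$. The only difference is that you spell out the kernel identification slightly more explicitly than the paper does.
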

\begin{proof}
The starting point is the Toeplitz extension, i.e., the exact sequence
\begin{equation*}
\xymatrix{
0\ar[r]&\Kk\ar[r]&\Tt\ar[r]^-\sigma&C(S^1)\ar[r]&0,
}
\end{equation*}
where $\sigma$ is the symbol map. Since the Toeplitz algebra is nuclear, so is $\Tt^{\otimes k}$, whence
the sequence of $C^*$-algebras
\begin{equation}
\label{seq-then-lem}
\xymatrix{
0\ar[r]&\Tt^{\otimes k}\otimes\Kk\ar[r]&
\Tt^{\otimes k}\otimes\Tt\ar[r]^-{\id\otimes\sigma}&\Tt^{\otimes k}\otimes C(S^1)\ar[r]&0
}
\end{equation}
is also exact. Equation~\eqref{TbyKEq} gives $\big(\Tt^{\otimes k}\otimes
\Kk\big)/\Kk^{\otimes k+1}\cong C(S^{2k-1}_H)\otimes \Kk$ by the nuclearity of~$\Kk$. So taking quotients by
$\Kk^{\otimes k+1}$ throughout~\eqref{seq-then-lem} yields the exact
sequence~\eqref{short-exact-main}. The $U(1)$-equivariance follows from the fact that all
the identifications used are $U(1)$-equivariant.
\end{proof}

The second result is a standard fact about compact-group actions, so we omit its proof.
\begin{lem}
\label{invariant-exact-lem} Let $G$ be a compact Hausdorff topological group and let $A$ be a $C^*$-algebra with a pointwise norm
continuous $G$-action
$\alpha:G\rightarrow\mathrm{Aut}(A)$. Let $I\subseteq A$ be a
closed two-sided $G$-invariant ideal of~$A$. Then $A/I$ admits the induced $G$-action, and the sequence of fixed-point algebras
\begin{equation*}
\xymatrix{
0\ar[r] & I^G\ar[r]& A^G\ar[r]&(A/I)^G\ar[r]&0
}
\end{equation*}
is exact.
\end{lem}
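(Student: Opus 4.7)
The plan is to build the proof on the averaging (conditional expectation) map $E_A : A \to A^G$ defined via the Haar measure on $G$ by
\begin{equation*}
E_A(a) := \int_G \alpha_g(a)\,dg,
\end{equation*}
which is well defined and norm-continuous because $\alpha$ is pointwise norm-continuous and $G$ is compact. The first step is to observe that, by $G$-invariance of $I$, the formula $\bar\alpha_g(a+I) := \alpha_g(a)+I$ gives a well-defined action of $G$ on $A/I$ by $^*$-automorphisms, and that this induced action is again pointwise norm-continuous: indeed, if $q : A \to A/I$ is the quotient map, then $\|\bar\alpha_g(q(a)) - q(a)\| = \|q(\alpha_g(a)-a)\| \le \|\alpha_g(a)-a\| \to 0$ as $g \to e$. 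Hence the analogous conditional expectation $E_{A/I} : A/I \to (A/I)^G$ is also defined, and $q \circ E_A = E_{A/I} \circ q$ by linearity of the Haar integral.

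Next, I would verify exactness at the three places. At $I^G$, the inclusion $\iota : I^G \hookrightarrow A^G$ is simply the restriction of the inclusion $I \hookrightarrow A$ and is thus injective. At $A^G$, an element $a \in A^G$ satisfies $q(a)=0$ if and only if $a \in I$, if and only if $a \in I \cap A^G = I^G$; so $\ker(q|_{A^G}) = \iota(I^G)$, and equality of $I^G$ with $I \cap A^G$ holds because an element of $I$ is fixed by $\alpha$ precisely when it is fixed by $\alpha|_I$.

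The only nontrivial point is surjectivity of $q|_{A^G} : A^G \to (A/I)^G$, which is the main step. Given $y \in (A/I)^G$, choose any lift $a \in A$ with $q(a) = y$. Applying $E_A$, we get $E_A(a) \in A^G$, and
\begin{equation*}
q(E_A(a)) = E_{A/I}(q(a)) = E_{A/I}(y) = \int_G \bar\alpha_g(y)\,dg = \int_G y\,dg = y,
\end{equation*}
since $y$ is fixed by the induced action. This produces the required preimage. The main potential obstacle is simply ensuring that the Haar-integral interchange $q \circ E_A = E_{A/I} \circ q$ is rigorous, which follows from continuity of $q$ and standard properties of Bochner-type integrals of continuous $A$-valued functions on the compact group $G$. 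Putting these three verifications together yields the short exact sequence of fixed-point algebras.
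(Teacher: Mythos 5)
Your proof is correct, and it is precisely the standard averaging argument that the paper has in mind when it writes ``the second result is a standard fact about compact-group actions, so we omit its proof'': the only nontrivial point is surjectivity onto $(A/I)^G$, which you handle correctly via the conditional expectation $E_A(a)=\int_G\alpha_g(a)\,dg$ and the intertwining $q\circ E_A=E_{A/I}\circ q$ (valid because the bounded linear map $q$ commutes with the norm-convergent Bochner integral). Since the paper supplies no proof of its own, there is nothing to compare beyond noting that your argument is the expected one and is complete.
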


To compute the $K$-groups of the invariant subalgebra $C(\PTt):=C(S_H^{2N+1})^\alpha$, we
first construct a family of short exact sequences. Fix $N\in\mathbb{N}$, $N\geq 1$. For
all $k\in \{1,\ldots,N\}$ apply the exact functor $\_\otimes\Kk^{\otimes N-k}$ to the
sequence~\eqref{short-exact-main} to obtain the short exact sequence
\begin{multline*}
\xymatrix{ 0\ar[r]&C(S^{2k-1}_H)\otimes\Kk^{\otimes N-k+1}\ar[r]&
C(S^{2k+1}_H)\otimes\Kk^{\otimes N-k}
}\\
\xymatrix{
 \ar[r]& \Tt^{\otimes k}\otimes C(S^1)\otimes\Kk^{\otimes N-k}\ar[r]& 0.
}
\end{multline*}
By Lemma~\ref{invariant-exact-lem}, the restriction of the above sequence to
$U(1)$-invariant subalgebras is again exact:
\begin{multline}
\label{short-exact-invariant-main} \xymatrix{
0\ar[r]&\Bigl(C(S^{2k-1}_H)\otimes\Kk^{\otimes N-k+1}\Bigr)^{U(1)}\ar[r]&
\Bigl(C(S^{2k+1}_H)\otimes\Kk^{\otimes N-k}\Bigr)^{U(1)}
}\\
\xymatrix{
 \ar[r]& \Bigl(\Tt^{\otimes k}\otimes C(S^1)\otimes\Kk^{\otimes N-k}\Bigr)^{U(1)}\ar[r]& 0
}.
\end{multline}
Our gauge trick \eqref{kappa}--\eqref{kappa-1} shows that $\Tt^{\otimes k}\otimes C(S^1)\otimes\Kk^{\otimes N-k}$ with
diagonal $U(1)$-action is $U(1)$-equivariantly isomorphic with $\Tt^{\otimes k}\otimes
C(S^1)\otimes \Kk^{\otimes N-k}$ where $U(1)$ acts only on the $C(S^1)$-component. Hence
\begin{equation}
(\Tt^{\otimes k}\otimes C(S^1)\otimes\Kk^{\otimes N-k})^{U(1)}\cong
\Tt^{\otimes k}\otimes\Kk^{\otimes N-k}.
\label{inv-obs-1}
\end{equation}

Next, let
\begin{equation*}
S_k:=\Bigl(C(S^{2k+1}_H)\otimes\Kk^{\otimes N-k}\Bigr)^{U(1)},\quad
k\in\{0,\ldots,N\}.
\end{equation*}
Using this notation and~\eqref{inv-obs-1}, we can write the family of
short exact sequences~\eqref{short-exact-invariant-main} as
\begin{equation}
\label{short-main-H}
\xymatrix{
0\ar[r]&S_{k-1}\ar[r]&S_k\ar[r]&
\Tt^{\otimes k}\otimes\Kk^{\otimes N-k}\ar[r]&0,
}
\end{equation}
where $k\in\{1,\ldots,N\}$.

\begin{thm}
Let $N$ be a positive integer. Then
$$
K_0(C(\PTt))=\mathbb{Z}^{N+1}\qquad \text{and}\qquad K_1(C(\PTt))=0.
$$
\end{thm}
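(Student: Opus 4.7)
The plan is to prove by induction on $k \in \{0, 1, \ldots, N\}$ that $K_0(S_k) \cong \ZZ^{k+1}$ and $K_1(S_k) = 0$; the theorem is then the case $k = N$, since $S_N = C(S^{2N+1}_H)^{U(1)} = C(\PTt)$. For the base case $k = 0$, the $N = 0$ instance of Definition~\ref{dfn:twisted sphere} yields $C(S^1_H) = \Tt/\Kk = C(\TT)$, so $S_0 = (C(\TT) \otimes \Kk^{\otimes N})^{U(1)}$. Applying the gauge trick~\eqref{kappa} exactly as in the derivation of~\eqref{inv-obs-1} (after commuting the tensor factors) converts the diagonal $U(1)$-action into an action on the $C(\TT)$-factor alone, whose invariants collapse to $\CC$; hence $S_0 \cong \Kk^{\otimes N}$, giving $K_0(S_0) = \ZZ$ and $K_1(S_0) = 0$.

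For the inductive step, I assume $K_0(S_{k-1}) = \ZZ^k$ and $K_1(S_{k-1}) = 0$. Using $K_0(\Tt) = \ZZ$, $K_1(\Tt) = 0$, the K\"unneth theorem, and the stability of $K$-theory under tensoring with $\Kk$, I obtain $K_0(\Tt^{\otimes k} \otimes \Kk^{\otimes N-k}) = \ZZ$ and $K_1(\Tt^{\otimes k} \otimes \Kk^{\otimes N-k}) = 0$. Feeding these values into the six-term exact sequence associated with~\eqref{short-main-H}, both index maps are forced to vanish by the surrounding zero groups, so the sequence collapses to
\begin{equation*}
0 \longrightarrow \ZZ^k \longrightarrow K_0(S_k) \longrightarrow \ZZ \longrightarrow 0, \qquad K_1(S_k) = 0.
\end{equation*}
Freeness of $\ZZ$ splits the top sequence, yielding $K_0(S_k) \cong \ZZ^{k+1}$ and closing the induction.

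Given the short exact sequences~\eqref{short-main-H} and the preparatory gauge-trick reductions developed in Section~\ref{sec:PSpace Kth}, the argument is essentially automatic. The only step that genuinely requires care is the base-case identification $S_0 \cong \Kk^{\otimes N}$, which amounts to the $k=0$ specialization of the reduction already used to establish~\eqref{inv-obs-1}; beyond this, I do not expect any serious obstacle, since the vanishing of the connecting maps and the splitting of the short exact sequence are both forced by the surrounding $K$-theory groups computed above.
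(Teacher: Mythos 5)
Your proposal is correct and follows essentially the same route as the paper: the same induction on $k$ with base case $S_0\cong\Kk^{\otimes N}$ via the gauge trick, and the same six-term exact sequence argument applied to~\eqref{short-main-H}, concluding with the splitting forced by freeness of $\ZZ$. The only cosmetic difference is that you make explicit the identification $C(S^1_H)=\Tt/\Kk=C(\TT)$ and the use of $\Kk$-stability, both of which the paper leaves implicit.
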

\begin{proof}
Since $S_N = C(\PTt)$, it suffices to prove that $K_0(S_k) = \ZZ^{k+1}$ and $K_1(S_k) =
\{0\}$ for all $k\in\{1,\ldots,N\}$. We do this by induction on $k$. For $k = 0$, the
gauge trick gives
$$
S_0=\bigl(C(S^1)\otimes\Kk^{\otimes N}\bigr)^{U(1)}\cong\Kk^{\otimes N}.
$$
Consequently,
\begin{equation*}
K_0(S_0)\cong K_0(\Kk)=\mathbb{Z},\quad
K_1(S_0)\cong K_1(\Kk)=0.
\end{equation*}

Now assume that $K_0(S_{k-1}) = \mathbb{Z}^k$ and $K_1(S_{k-1}) = 0$. The short exact
sequence \eqref{short-main-H} of $C^*$-algebras induces the six-term exact sequence of
Abelian groups:
\begin{equation}
\label{six-term-complex}
\xymatrix{
K_0(S_{k-1})\ar[r]&K_0(S_k)\ar[r]&
K_0(\Tt^{\otimes k})\ar[d]\\
K_1(\Tt^{\otimes k})\ar[u]
&K_1(S_k)\ar[l]
&K_1(S_{k-1}).\ar[l]
}
\end{equation}
The K\"unneth theorem gives $K_0(\Tt^{\otimes k}) = \ZZ$ and $K_1(\Tt^{\otimes k}) =0$.
Combining this with the inductive hypothesis, the
sequence~\eqref{six-term-complex} becomes
\begin{equation*}
\xymatrix{
\mathbb{Z}^k \ar[r]&K_0(S_k)\ar[r]&
\mathbb{Z}\ar[d]\\
0\ar[u]
&K_1(S_k)\ar[l]
&\hspace*{1.5mm}0.\ar[l]
}
\end{equation*}
Exactness gives $K_1(S_k)=0$, and exactness combined with the projectivity of free
abelian groups gives $K_0(S_k)=\mathbb{Z}\oplus \mathbb{Z}^{k} =\mathbb{Z}^{k+1}$.
\end{proof}

\subsection{Twisted multipullback quantum complex projective spaces}

We begin by establishing notation. Fix a positive integer $N$, and let $\theta \in
M_{N+1}(\RR)$ be an antisymmetric real matrix. For $k,l \le N$, define $\Theta_{kl}
:=e^{2\pi i\theta_{kl}}$. For $0 \le k \le l \le N$, let $\Tt_{k,l}$ be the universal
$C^*$-algebra generated by the isometries $s_k,\ldots, s_l$ satisfying the usual
identities:
\begin{equation*}
s_is_j=\Theta_{ij}s_js_i,\ s_i^*s_j=\overline{\Theta}_{ij}s_js_i^*.
\end{equation*}
We will identify $\Tt_{k,l}$ with the corresponding subalgebra of $\Tt_{0,N}$. Let
$\Kk_{(k,l)}$ be the ideal of $\Tt_{k,l}$ generated by the product $\prod_{i=k}^l
(1-s_is_i^*)$. For each $k \le N$, the universal property of $\Tt_{0,N}$ shows that the
formula
$$
\alpha_k(s_i) := \Theta_{ik}s_i
$$
defines actions $\alpha_k$ of both $\NN$ and $\ZZ$ on $\Tt_{0,N}$, and hence on each
$\Tt_{l_1,l_2}$.

The idea of the computation is the same as in the untwisted case, with small changes due
to the fact that the isometries generating the noncommutative sphere do not commute. We
regard the twisted noncommutative sphere as the quotient of the twisted semigroup algebra
of $\NN^{N+1}$ by the ideal of compact operators: $C^*(\NN^{N+1},\Theta)/\Kk$. A
convenient presentation of $C^*(\NN,\Theta)$ that will be used below comes from the fact
that
$$
C^*(\NN^{N+1},\Theta)\cong (\ldots((\Tt\rtimes_{\alpha_1}\NN)\rtimes\NN)\ldots )\rtimes_{\alpha_{N}} \NN ,
$$
where the actions $\alpha_k$ are determined by the cocycle $\Theta$. While there exists a
considerable theory of semigroup $C^*$-algebras, we do not need to use it below. Instead,
we will reduce the computation to the one done in the untwisted case.

Let $\mu=(\mu_{k},\ldots,\mu_l )\in\NN^{l+1-k}$ be a multi-index, and let $\{e_\mu\}_\mu$
be the standard orthonormal basis of $l^2(\NN^{l+1-k})$. For $k \le i \le l$, let
$\delta_i := (0,\ldots,1,\ldots 0) \in \NN^{l+1-k}$ with $1$ in the slot labeled by $i$.
Define
$$
\pi_{(k,l)}(s_i)e_\mu :=\prod_{k\leq i<j\leq l}\Theta_{ij}^{\mu_j}e_{\mu +\delta_i}.
$$

\begin{lem}\label{lem:tech1}
Let $k\in\{1,\ldots,N\}$. In the decomposition
$$
l^2(\NN^{N+1})=l^2(\NN^{k})\otimes
l^2(\NN^{N+1-k}),
$$
 where the second factor corresponds to the last $N+1-k$ components in
$\NN^{N+1}$, the following equalities hold:
\begin{gather*}
\pi_{(0,N)}(\Tt_{0,k-1} \Kk_{(k,N)})=\pi_{(0,k-1)}(\Tt_{0,k-1})\otimes \Kk_{(k,N)},\\
\pi_{0,N} (\Kk_{(0,N)})=\pi_{(0,k-1)}(\Kk_{(0,k-1)})\otimes \Kk_{(k,N)}.
\end{gather*}
\end{lem}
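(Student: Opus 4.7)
The plan is to track how the generators act in the tensor decomposition $\ell^2(\NN^{N+1})=\ell^2(\NN^k)\otimes \ell^2(\NN^{N+1-k})$. Writing a multi-index as $\mu=(\nu,\eta)$ with $\nu\in\NN^k$ and $\eta\in\NN^{N+1-k}$ and splitting the product in the defining formula according to whether $j<k$ or $j\ge k$, a direct computation yields
\begin{equation*}
\pi_{(0,N)}(s_i)=\pi_{(0,k-1)}(s_i)\otimes U_i\quad(i<k),\qquad
\pi_{(0,N)}(s_i)=1\otimes \pi_{(k,N)}(s_i)\quad(i\ge k),
\end{equation*}
where $U_i$ is the diagonal unitary on $\ell^2(\NN^{N+1-k})$ defined by $U_ie_\eta:=\prod_{k\le j\le N}\Theta_{ij}^{\eta_j}e_\eta$. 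Since each $U_i$ is unitary, it follows that $\pi_{(0,N)}(1-s_is_i^*)=\pi_{(0,k-1)}(1-s_is_i^*)\otimes 1$ for $i<k$, and, by taking ordered products, that every monomial $s_{\alpha'}\in\Tt_{0,k-1}$ satisfies $\pi_{(0,N)}(s_{\alpha'})=\pi_{(0,k-1)}(s_{\alpha'})\otimes U_{\alpha'}$ for a (still diagonal) unitary $U_{\alpha'}$.

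For the first equality I would prove both inclusions on dense subsets. For $\subseteq$, given a monomial $s_{\alpha'}\in\Tt_{0,k-1}$ and $k_0\in\Kk_{(k,N)}$, the formulas above give
\begin{equation*}
\pi_{(0,N)}(s_{\alpha'}k_0)=\pi_{(0,k-1)}(s_{\alpha'})\otimes U_{\alpha'}\pi_{(k,N)}(k_0);
\end{equation*}
as $U_{\alpha'}$ is bounded, $U_{\alpha'}\pi_{(k,N)}(k_0)$ remains in $\pi_{(k,N)}(\Kk_{(k,N)})$, so each such element sits in $\pi_{(0,k-1)}(\Tt_{0,k-1})\otimes \Kk_{(k,N)}$, and linearity plus closure give the inclusion. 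For $\supseteq$, given a simple tensor $\pi_{(0,k-1)}(s_{\alpha'})\otimes T$ with $T\in\pi_{(k,N)}(\Kk_{(k,N)})$, I would pick $k_0'\in\Kk_{(k,N)}$ with $\pi_{(k,N)}(k_0')=U_{\alpha'}^{\ast}T$---available by the twisted analogue of Lemma~\ref{lem:inclusion}, which identifies $\pi_{(k,N)}(\Kk_{(k,N)})$ with $\Kk(\ell^2(\NN^{N+1-k}))$---and observe $\pi_{(0,N)}(s_{\alpha'}k_0')=\pi_{(0,k-1)}(s_{\alpha'})\otimes T$.

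For the second equality I would factor the defining generator as $\prod_{j=0}^N(1-s_js_j^*)=R_{0,k-1}\cdot R_{k,N}$, where $R_{a,b}:=\prod_{j=a}^b(1-s_js_j^*)$. The formulas of the first paragraph give
\begin{equation*}
\pi_{(0,N)}(R_{0,k-1}\,R_{k,N})=\pi_{(0,k-1)}(R_{0,k-1})\otimes \pi_{(k,N)}(R_{k,N}).
\end{equation*}
Since $\Kk_{(0,N)}$ is the closed linear span of $s_\alpha R_{0,N}s_\beta^*$ over $\alpha,\beta\in\NN^{N+1}$, I would split each multi-index as $\alpha=(\alpha',\alpha'')$, $\beta=(\beta',\beta'')$ and use the tensor decomposition of the generators together with the fact that $U_{\alpha'},U_{\beta'}$ are unitary to show that $\pi_{(0,N)}(s_\alpha R_{0,N}s_\beta^*)$ lies in $\pi_{(0,k-1)}(\Kk_{(0,k-1)})\otimes \pi_{(k,N)}(\Kk_{(k,N)})$. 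Conversely, the matrix-unit description of Lemma~\ref{lem:inclusion} (applied to both factors) shows that the tensors of matrix units from the two factors lie in the image of $\pi_{(0,N)}$ applied to appropriate $s_\alpha R_{0,N}s_\beta^*$, and these span a norm-dense subset of $\Kk\otimes \Kk$, yielding the opposite inclusion.

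The main obstacle is establishing the twisted version of Lemma~\ref{lem:inclusion}, namely that $\pi_{(k,N)}$ (and similarly $\pi_{(0,k-1)}$) identifies $\Kk_{(k,N)}$ with the full compact operators on $\ell^2(\NN^{N+1-k})$. This requires repeating the matrix-unit computation of Lemma~\ref{lem:inclusion} while tracking the extra phases from the twisted commutation relations; fortunately the phases are scalar multiples and therefore do not obstruct the matrix-unit relations $E_{pq}E_{rs}=\delta_{qr}E_{ps}$. Once this identification is in place the rest of the argument is routine bookkeeping with tensor-product closures.
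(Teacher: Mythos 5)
Your proof is correct and follows essentially the same route as the paper's, whose entire argument is the two-line observation that by construction $\pi_{(0,N)}(s_i)\in \pi_{(0,k-1)}(\Tt_{0,k-1})\otimes_{\min}B(\ell^2(\NN^{N+1-k}))$ for $i<k$ and $\pi_{(0,N)}(\Kk_{(k,N)})\subseteq \pi_{(0,k-1)}(\Tt_{0,k-1})\otimes_{\min}\Kk(\ell^2(\NN^{N+1-k}))$, from which "the claim of the lemma follows." Your explicit tensor factorization of the generators, the diagonal unitaries $U_i$, and the twisted analogue of Lemma~\ref{lem:inclusion} identifying $\pi_{(k,N)}(\Kk_{(k,N)})$ with the full compacts supply exactly the details the paper leaves implicit.
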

\begin{proof}
By construction, for $i<k$,
\begin{gather*}
\pi_{(0,N)}(s_i)\in \pi_{(0,k-1)}(\Tt_{0,k-1}) \otimes_{\min} B(l^2(\NN^{N+1-k})),\\
\pi_{(0,N)}(\Kk_{(k,N)})\subset \pi_{(0,k-1)}(\Tt_{0,k-1})\otimes_{\min} \Kk (l^2(\NN^{N+1-k})).
\end{gather*}
Now the claim of the lemma follows.
\end{proof}

\begin{cor}
Let $k\in\{1,\ldots,N-1\}$. Put $C(S^{2k-1}_{H,\Theta_{0j}}):=\Tt_{0,j}/\Kk_{(0,j)}$.
There exists a U(1)-equivariant short exact sequence of $C^*$-algebras:
\begin{multline*}
\xymatrix{
    0\ar[r]&
        C(S^{2k-3}_{H,\Theta_{0\,(k-1)}})\otimes \Kk_{(k,N)}\ar[r]&
        C(S^{2k-1}_{H,\Theta_{0 k}})\otimes {\Kk_{(k+1,N)}}
        }\\
\xymatrix{
    \ar[r]&
    \left( \Tt_{0,k-1}\rtimes_{\alpha_{k}}\ZZ \right) \otimes {\Kk_{(k+1,N)}}\ar[r]&
    0.
}
\end{multline*}
The action of U(1) is the one induced naturally from its diagonal action on $\Tt_{0,N}$.
\end{cor}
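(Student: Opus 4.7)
The plan is to obtain the claimed short exact sequence by relativizing a twisted Toeplitz-type extension for $\Tt_{0,k}$, in direct parallel with the proof of the preceding untwisted result~\eqref{short-exact-main}: first I set up the basic extension coming from inverting $s_k$, then tensor with $\Kk_{(k+1,N)}$, and finally mod out by a common compact-operator ideal.

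The core technical step, and the main anticipated obstacle, is to establish the $U(1)$-equivariant short exact sequence
\begin{equation*}
0 \longrightarrow \Tt_{0,k-1}\otimes \Kk \longrightarrow \Tt_{0,k} \longrightarrow \Tt_{0,k-1}\rtimes_{\alpha_{k}}\ZZ \longrightarrow 0.
\end{equation*}
For the ideal, I would identify the two-sided ideal $I$ of $\Tt_{0,k}$ generated by $1-s_ks_k^*$ with the product space $\Tt_{0,k-1}\,\Kk_{(k,k)}$, using the $\Theta$-commutation relations to move $\Tt_{0,k-1}$-factors past $\Tt_{k,k}$-factors up to scalar phases; Lemma~\ref{lem:tech1}, applied inside $\Tt_{0,k}$ (that is, with $N$ replaced by $k$), then identifies $I$ with $\Tt_{0,k-1}\otimes\Kk$ via the faithful representation $\pi_{(0,k)}$. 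For the quotient, I would note that in $\Tt_{0,k}/I$ the image $u$ of $s_k$ becomes a unitary, and the relation $s_is_k=\Theta_{ik}s_ks_i$ gives $u s_i u^*=\overline{\Theta}_{ik}s_i=\alpha_k^{-1}(s_i)$ for $i<k$; the universal properties of $\Tt_{0,k}$ and of the ordinary crossed product then produce mutually inverse $*$-homomorphisms between $\Tt_{0,k}/I$ and $\Tt_{0,k-1}\rtimes_{\alpha_k}\ZZ$. The subtle point is confirming that $\Tt_{0,k-1}$ embeds faithfully into $\Tt_{0,k}/I$, which is where the universal-property argument could fail if one is not careful. The diagonal $U(1)$-action on $\Tt_{0,N}$ restricts to $\Tt_{0,k}$, fixes $1-s_ks_k^*$, and lifts to the scaling $U(1)$-action on the crossed product, so all maps in the sequence are equivariant.

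Tensoring this extension with the nuclear algebra $\Kk_{(k+1,N)}$ preserves exactness, and the canonical identification $\Kk\otimes\Kk_{(k+1,N)}\cong\Kk_{(k,N)}$ yields the $U(1)$-equivariant sequence
\begin{equation*}
0 \to \Tt_{0,k-1}\otimes \Kk_{(k,N)} \to \Tt_{0,k}\otimes \Kk_{(k+1,N)} \to \bigl(\Tt_{0,k-1}\rtimes_{\alpha_k}\ZZ\bigr) \otimes \Kk_{(k+1,N)} \to 0.
\end{equation*}
Applying the second identity of Lemma~\ref{lem:tech1} with parameters $k$ and then $k+1$, the $U(1)$-invariant ideal $J:=\Kk_{(0,k-1)}\otimes\Kk_{(k,N)}$ of the left-hand term coincides with $\Kk_{(0,k)}\otimes\Kk_{(k+1,N)}$ sitting inside the middle term (both realize the compact operators on $\ell^2(\NN^{N+1})$), so $J$ is also an ideal of the middle term. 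Dividing through by $J$ turns the left term into $(\Tt_{0,k-1}/\Kk_{(0,k-1)})\otimes\Kk_{(k,N)}$, the middle into $(\Tt_{0,k}/\Kk_{(0,k)})\otimes\Kk_{(k+1,N)}$, and leaves the right-hand side unchanged, which, after unfolding the definition $C(S^{2k-1}_{H,\Theta_{0j}})=\Tt_{0,j}/\Kk_{(0,j)}$, gives exactly the sequence in the corollary; equivariance passes to the quotient since $J$ is generated by $U(1)$-fixed projections.
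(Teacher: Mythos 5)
Your argument is correct and follows essentially the same route as the paper: the paper's proof also reduces the claim, via Lemma~\ref{lem:tech1}, to the identification $\Tt_{0,k}/\bigl(\Tt_{0,k-1}\Kk_{(k,k)}\bigr)\cong\Tt_{0,k-1}\rtimes_{\alpha_k}\ZZ$ read off from the representation $\pi_{(0,k)}$, and then tensors and quotients exactly as in the untwisted sequence~\eqref{short-exact-main}. You simply spell out the details that the paper leaves implicit, including the one genuinely delicate point (faithfulness of $\Tt_{0,k-1}\to\Tt_{0,k}/I$ and of the crossed-product comparison map), which you correctly flag.
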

\begin{proof}
Lemma~\ref{lem:tech1} reduces the claim to the identity
\begin{multline*}
\pi_{(0,k)}(\Tt_{0,k})/\pi_{(0,k)}(\Tt_{0,k-1}\Kk_{0,k})\\
=\pi_{(0,k)}(\Tt_{0,k-1}\Tt_{k,k})/\pi_{(0,k)}(\Tt_{0,k-1}\Kk_{(k,k)})\cong \Tt_{0,k-1}\rtimes_{\alpha_k} \ZZ,
\end{multline*}
which immediately follows  from the construction of~$\pi_{(0,k)}$.
\end{proof}

\begin{proof}[Proof of Theorem~\ref{thm:main}(\ref{it:main-PS-Kth})]
For $0\leq k \le N$, let $T_k := (C(S^{2k+1}_{\Theta_{0 k}})\otimes {\Kk_{(k+1,N)}})^{U(1)}$.
Since the crossed product $\Tt_{0,k-1}\rtimes_{\alpha_{k}}\ZZ$ contains the regular
representation of $\ZZ$, and hence a copy of the regular representation of U(1) on
$C^*(\ZZ)=C(S^1)$, we get, as in the untwisted case,
$$
\big(\left( \Tt_{0,k-1}\rtimes_{\alpha_{k}}\ZZ \right) \otimes {\Kk_{(k+1,N)}}\big)^{U(1)}\cong  \Tt_{0,k-1} \otimes {\Kk_{(k+1,N)}}.
$$
Finally, as $T_N\cong C(\mathbb{P}^N_\theta(\Tt))$ by Theorem~\ref{prp:pullback
isomorphism}, the rest of the argument is the same as in the untwisted case, with $T_k$
in place of~$S_k$.
\end{proof}

\section{Noncommutative line bundles over multipullback quantum complex projective spaces}\label{sec:line bundles}

\subsection{Equivariant homomorphisms and spectral subspaces}

Take a $U(1)$-equi\-var\-iant *-homomorphism $f\colon A\to A'$ of unital
$U(1)$-$C^*$-algebras, and suppose that the $U(1)$-action on $A$ is free. Then there
exists a strong connection $\ell$ on~$A$. It is straightforward to check that
$\ell':=(f\otimes f)\circ\ell$ is a strong connection on~$A'$, so that the $U(1)$-action
on $A'$ is also free. The $U(1)$-equivariance of $f$ guarantees that its restriction to
the fixed-point subalgebra $B:=A^{U(1)}$ corestricts to the fixed-point subalgebra
$B':=(A')^{U(1)}$. This $f$ turns $B'$ into a $B'$--$B$ bimodule given by the usual
multiplication on the left and the formula $b'\cdot b:=b'f(b)$ on the right.

Since $f : A \to f(A)$ is a linear surjection over a field, it splits. So there exists a
linear map $g\colon f(A)\to A$ such that $f\circ g=\id_{f(A)}$. We have $A=g(f(A))\oplus
\ker f$. Let $\{a'_j\}_j$ be an extension of a basis $\{e'_i\}_i$ of $f(A)$ to a basis
of~$A'$. Also, let $\{e_k\}_k$ be a basis of~$\ker f$. Then $\{a_l\}_l:=\{g(e'_i)\}_i\cup
\{e_k\}_k$ is a basis of~$A$, and $f(a_l)=a'_l$ or $f(a_l)=0$. For any $n\in\mathbb{Z}$,
we can write $\ell(u^n)=\sum_{l\in L}a_l\otimes r_l(u^n)$ and $\ell'(u^n)=\sum_{l\in
L'}a'_l\otimes f(r_l(u^n))$. Here $L'$ and $L$ are respectively $m'$ and $m$ element
sets, with $m'\leq m$, and $f(a_l)=a'_l$ for $l\leq m'$ and $f(a_l)=0$ for $l>m'$.

It follows from the Chern-Galois theory of~\cite{bh04} that the existence of a strong
connection guarantees that spectral subspaces are finitely generated projective as left
modules over fixed-point $C^*$-algebras. Given a strong connection $\ell$ and a spectral
subspace $A_n$, we have an explicit formula given in \cite[Theorem~3.1]{bh04} for an
idempotent $E^n$ representing the spectral subspace: $E_{kl}^n:= r_k(u^n)a_l$. Hence
$f(E_{kl}^n)=f(r_k(u^n))a'_l$ for $l\leq m'$ and $f(E_{kl}^n)=0$ for $l>m'$ are the
matrix coefficients of an idempotent representing $B'\otimes_B A_n$. Using the strong
connection $\ell'$ and the linear basis $\{a'_l\}_l$, we conclude that the matrix
coefficients of an idempotent representing $A'_n$ are also $f(r_k(u^n))a'_l$, but with
indices $k,l\in L'$.

To continue this reasoning and to take care of the range of indices,
 it is convenient to adopt the block-matrix notation. Let $\beta_n:=(r_1(u^n),\ldots,r_m(u^n))$
and $\gamma:=(a_1,\ldots,a_m)$. Much in the same way, let $\beta'_n:=(f(r_1(u^n)),\ldots,f(r_{m'}(u^n)))$
and $\gamma':=(a'_1,\ldots,a'_{m'})$.
Then $E^n= {\beta_n}^T\gamma\in M_m(B)$ is an idempotent matrix representing~$A_n$, and
$(E')^n= {\beta'_n}^T\gamma'\in M_{m'}(B')$ is an idempotent matrix representing~$A'_n$.
Finally, put
\begin{gather*}
\beta''_n:=(f(r_1(u^n)),\ldots,f(r_{m}(u^n))=:(\beta'_n,\rho'_n)\qquad\text{and}
\\
\nonumber\gamma'':=(\gamma',0,\ldots,0)\qquad \text{(with $m-m'$ zeros at the end)}.
\end{gather*}

\noindent
 Then $(E'')^n= {\beta''_n}^T\gamma''\in M_m(B')$ is an idempotent matrix
representing~$B'\otimes_BA_n$.

The crux of our argument is that $(E')^n$ and $(E'')^n$ represent \emph{isomorphic} left
$B'$-modules. After extending $(E')^n$ by zeros to size $m$, we obtain a matrix
conjugate\footnote{ We are grateful to Tomasz Maszczyk for pointing this out to us.}
to~$(E'') ^n$:
\begin{equation*}
\left( \begin{array}{cc}
1& 0  \\
-{\rho'_n}^T\gamma' & 1 \end{array} \right)
\left( \begin{array}{c}
{\beta'_n}^T\\
{\rho'_n}^T \end{array} \right)
\overset{\left( \begin{array}{cc}
\gamma' & 0  \end{array} \right)}
{\phantom{\text{\tiny x}}}
\left( \begin{array}{cc}
1& 0  \\
{\rho'_n}^T\gamma' & 1 \end{array} \right)=
\left( \begin{array}{c}
{\beta'_n}^T\\
0\end{array} \right)
\overset{\left( \begin{array}{cc}
\gamma' & 0  \end{array} \right)}
{\phantom{\text{\tiny x}}}.
\end{equation*}

\noindent
Here we used the fact that $\gamma'{\beta'_n}^T=1$, which is
condition~\eqref{eq:m circ l} for the strong connection~$\ell'$.
Following the reasoning of the previous paragraph we have arrived at:

\begin{thm}\label{thm:epullback}
Let \mbox{$f\colon A\to A'$} be a $U(1)$-equivariant *-homomorphism of unital
$U(1)$-$C^*$-algebras, and let $B$ and $B'$ be the respective fixed-point
$C^*$-subalgebras. Assume that the $U(1)$-action on $A$ is free. For each $n \in
\mathbb{Z}$, let $A_n$ and $A'_n$ denote the $n$\textsuperscript{th} spectral subspaces
of $A$ and $A'$ respectively. Then, for any $n\in\mathbb{Z}$, there is an isomorphism of
finitely generated left $B'$-modules:
\begin{equation*}
B'\underset{B}{\otimes}A_n\cong A'_n.
\end{equation*}
In particular, the induced map $(f|_B)_*:K_0(B)\to K_0(B')$ satisfies
\begin{equation*}
(f|_B)_*\big([A_n]\big)=[A'_n] \quad\text{ for every } n\in\mathbb{Z}.
\end{equation*}
\end{thm}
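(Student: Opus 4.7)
The plan is to exploit the explicit description of spectral subspaces as finitely generated projective modules via strong connections, and then reduce the comparison of $B'\otimes_B A_n$ and $A'_n$ to an elementary matrix-conjugation argument. Since the $U(1)$-action on $A$ is $C^*$-free, it admits a strong connection $\ell\colon\CC[u,u^*]\to A\otimes_{\mathrm{alg}} A$. My first step is to verify that $\ell':=(f\otimes f)\circ\ell$ is a strong connection on $A'$; this uses only the $U(1)$-equivariance and multiplicativity of $f$ together with conditions~\eqref{eq:m circ l} and~\eqref{eq:degree}. In particular, the $U(1)$-action on $A'$ is $C^*$-free and $f$ restricts to a $*$-homomorphism $f|_B\colon B\to B'$, making $B'$ a $B'$--$B$-bimodule.

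Next, I would choose bases carefully. Pick a linear splitting $g\colon f(A)\to A$ of $f$, and extend a basis $\{e'_i\}$ of $f(A)$ to a basis $\{a'_l\}_{l=1}^{m'}$ of $A'$; combine $\{g(e'_i)\}$ with a basis of $\ker f$ to obtain a basis $\{a_l\}_{l=1}^m$ of $A$ with $m\ge m'$, $f(a_l)=a'_l$ for $l\le m'$, and $f(a_l)=0$ for $l>m'$. Writing $\ell(u^n)=\sum_l a_l\otimes r_l(u^n)$, the Chern--Galois idempotent formula of \cite[Theorem~3.1]{bh04} gives $E^n_{kl}=r_k(u^n)a_l\in M_m(B)$ representing $A_n$ as a left $B$-module. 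Applying $f$ coordinate-wise produces an $m\times m$ idempotent $(E'')^n$ over $B'$ whose last $m-m'$ columns vanish; this represents $B'\otimes_B A_n$. On the other hand, the strong connection $\ell'$ yields an $m'\times m'$ idempotent $(E')^n=\beta'^T_n\gamma'$ over $B'$ representing $A'_n$, where $\beta'_n$ consists of the first $m'$ entries of $(f(r_l(u^n)))_l$.

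The heart of the argument is then to show that $(E')^n$, extended by zeros to size $m$, is conjugate over $B'$ to $(E'')^n$. Writing the extra tail as $\rho'_n$, this is accomplished by the explicit elementary conjugation
\begin{equation*}
\begin{pmatrix} 1 & 0 \\ -\rho'^T_n\gamma' & 1 \end{pmatrix}
\begin{pmatrix} \beta'^T_n \\ \rho'^T_n \end{pmatrix}
\begin{pmatrix} \gamma' & 0 \end{pmatrix}
\begin{pmatrix} 1 & 0 \\ \rho'^T_n\gamma' & 1 \end{pmatrix}
=\begin{pmatrix} \beta'^T_n \\ 0 \end{pmatrix}
\begin{pmatrix} \gamma' & 0 \end{pmatrix},
\end{equation*}
whose verification reduces to the strong-connection identity $\gamma'\beta'^T_n=m_{A'}(\ell'(u^n))=1$ coming from~\eqref{eq:m circ l} for $\ell'$. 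This exhibits a similarity of idempotents in $M_m(B')$, hence the desired left $B'$-module isomorphism $B'\otimes_B A_n\cong A'_n$.

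The $K$-theoretic consequence is then formal: since $[A_n]\in K_0(B)$ is by definition the class of the idempotent $E^n$, the functoriality of $K_0$ combined with the fact that $f|_B$ applied to $E^n$ gives $(E'')^n$ shows $(f|_B)_*([A_n])=[(E'')^n]=[(E')^n]=[A'_n]$. The main obstacle I anticipate is not conceptual but bookkeeping: one must keep the dimensions $m\ge m'$ and the splitting of $A$ into $g(f(A))\oplus\ker f$ consistent so that $f$ acts as the identity on the first $m'$ basis vectors and annihilates the rest. Once this is set up, the conjugation identity is a routine computation in $2\times 2$ block matrices, and the only non-trivial input is the defining property of a strong connection on $A'$.
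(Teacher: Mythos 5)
Your proposal is correct and follows essentially the same route as the paper's own argument: push the strong connection forward along $f\otimes f$, choose a basis of $A$ adapted to the splitting $A = g(f(A))\oplus\ker f$, compare the Chern--Galois idempotents for $B'\otimes_B A_n$ and $A'_n$, and conclude via the explicit block-matrix conjugation using $\gamma'\beta'^T_n=1$. No substantive differences to report.
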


\subsection{Pairwise non-isomorphism}

The goal of this section is to prove Theorem~\ref{thm:main}(\ref{it:main-stabnontriv}), i.e.~to show that
the line bundles over the multipullback quantum complex projective space $\mathbb{P}^N(\Tt)$ associated to
the Heegaard odd quantum sphere $S^{2N+1}_H$ are classified by their defining winding number.
 We will do it reducing the problem to the special case $N=1$, which was already solved elsewhere.
Here the main problem is that we do not have any $U(1)$-equivariant maps from $C(S^{2N+1}_H)$ to~$C(S^{3}_H)$.
We overcome this difficulty by finding a wrong-way equivariant map that restricted to fixed-point subalgebras induces
 an isomorphism on the K-groups.

To begin with, we need to unravel the pullback structure of~$C(S^{2N+1}_H)$:
\begin{lem}
\label{SphereIterate}
For any $N\in\mathbb{N}$, $N>0$,  the $U(1)$-$C^*$-algebra $C(S^{2N+1}_H)$ can be presented as the following
equivariant pullback:
\begin{equation*}
\xymatrix{
&C(S^{2N+1}_H)\ar[dl]_{\text{\rm pr}^N_1}\ar[dr]^{\text{\rm pr}^N_2}&\\
C(S^{2N-1}_H)\otimes\Tt\ar[dr]_{\id\otimes\sigma} && \Tt^{\otimes N}\otimes C(S^1)\ar[dl]^{\bm{\sigma}\otimes\id}\\
&C(S^{2N-1}_H)\otimes C(S^1)\,. &
}
\end{equation*}
Here $\bm{\sigma}\colon \Tt^{\otimes N}\ni w\mapsto (\sigma_0(w),\dots,\sigma_{N-1}(w) )\in C(S^{2N-1}_H)$,
and $\sigma_i$ is defined by~\eqref{sigmai}, which is the $\theta=0$ case of~\eqref{sigmaitheta}. The defining *-homomorphisms
are equivariant with respect to the diagonal action.
\end{lem}
\begin{proof}
We adopt the definitions from \eqref{sigmai} and \eqref{piij}, but now we have
to play with different $N$ at the same time, whence the need for additional labeling:
\begin{align*}
B^N_i &:=\Tt^{\otimes i}\otimes C(S^1)\otimes\Tt^{\otimes (N-i)}\\
\pi^{i,N}_j &:= \id^j\otimes\sigma\otimes\id^{N-j}.
\end{align*}
Then, the definition of $C(S^{2N+1}_H)$ becomes:
\begin{equation*}\textstyle
C(S^{2N+1}_{H}) := \Big\{(b_0, \dots, b_N) \in \bigoplus^N_{i=0} B_i^N \;\big|\;\forall\;0 \le i < j \le N:\; \pi^{i,N}_j(b_i)
= \pi^{j,N}_i(b_j)\Big\}.
\end{equation*}
Denoting
$
\forall\; 0\leq i<j\leq M\leq N:\;\pi^{i, N}_j(b_i)=\pi^{j, N}_i(b_j)
$ by $P_M^N((b_i)_i)$,
we can rewrite this formula as
\begin{multline}
\textstyle
C(S^{2N+1}_H) = \Big\{((b_i)_i, b_N)\in(\bigoplus_{i=0}^{N-1}B_i^N)
\oplus(\Tt^{\otimes N}\otimes C(S^1))\;\big|
\\
\label{HeegN}
 P^N_{N-1}((b_i)_i)\textstyle{}\wedge\bigl(\forall\; 0\leq i\leq N-1:\;\pi^{i,N}_N(b_i) = (\sigma_i\otimes\id)(b_N)\bigr)\Big\}.
\end{multline}

Next, using the exactness of the tensor product $\_\otimes \Tt$ (which follows from nuclearity of
$\Tt$), we can write
\begin{align}
C(S^{2N-1}_H)\otimes\Tt
=&\textstyle\Big\{(\tilde b_i)_i\in\bigoplus_{i=0}^{N-1}B^{N-1}_i\;|\;P^{N-1}_{N-1}((\tilde b_i)_i) \Big\}\otimes\Tt
\nonumber\\
=&\textstyle\Big\{(b_i)_i\in\bigoplus_{i=0}^{N-1}B^{N}_i\;|\;P^{N}_{N-1}((b_i)_i) \Big\}.
\label{HeegN2}\end{align}
Combing \eqref{HeegN} with \eqref{HeegN2}, we arrive at:
\begin{multline*}
C(S^{2N+1}_H) =
\big\{((b_i)_i, b_N)\in\bigl(C(S^{2N-1}_H)\otimes\Tt\bigr)
\oplus\bigl(\Tt^{\otimes N}\otimes C(S^1)\bigr)\;|\;\\
\forall\;0\leq i\leq N-1:\;\pi^{i,N}_N(b_i) = (\sigma_i\otimes\id)(b_N)\big\}.
\end{multline*}
Finally, we obtain
\begin{align*}
C(&S^{2N+1}_H) \\
&= \big\{(x, y)\in\bigl(C(S^{2N-1}_H)\otimes\Tt\bigr)
\oplus\bigl(\Tt^{\otimes N}\otimes C(S^1)\bigr)\;|\;
(\id\otimes\sigma)(x)=(\bm{\sigma}\otimes\id)(y)\big\},
\end{align*}
which proves the lemma.
\end{proof}

The next step is to establish a wrong-way map with the right-way inverse in $K$-theory:
\begin{lem}
\label{IsomLemKGroups}
Consider $C(S^3_H)\otimes\Tt^{\otimes (N-1)}$ with the diagonal $U(1)$-action.
Then
$$
\eta\colon C(S^3_H)\ni x\longmapsto x\otimes 1\in C(S^3_H)\otimes\Tt^{\otimes (N-1)}
$$
is a $U(1)$-equivariant *-homomorphism whose restriction-corestriction $\bar\eta$
to the $U(1)$-invariant subalgebras
induces an isomorphism of $K$-groups:
$$
{\bar\eta}_{*}:K_{*}\big(C(\mathbb{P}^1(\Tt))\big)\longrightarrow
K_{*}\Big(\big(C(S^3_H)\otimes\Tt^{\otimes (N-1)}\big)^{U(1)}\Big).
$$
\end{lem}
\begin{proof}
The pullback presentation of $C(S^3_H)$ together with the exactness of tensoring with
$\Tt^{\otimes (N-1)}$ yields two $U(1)$-equivariant pullback diagrams. We combine them in
the following commutative diagram of $U(1)$-equivariant *-homomorphisms (all considered
with the diagonal $U(1)$-action):
\begin{equation}\label{diagram1}
\xymatrix@C=0pt{
&C(S^3_H)\ar[ld]\ar[rd]\ar@/^1pc/@{-->}[rrrr]^{\eta}& && &
\makebox[10ex]{$C(S^3_H)\otimes\Tt^{\otimes(N-1)}$}\ar[ld]\ar[rd]&\\
\makebox[6ex]{$\Tt\otimes C(S^1)$}\ar[dr]_{\sigma\otimes \id}
\ar@/^2pc/[rrrr]^{\id\otimes 1}
&&  \makebox[6ex]{$C(S^1)\otimes\Tt$}
\ar@/_1pc/[rrrr]_(0.7){\id\otimes 1}
\ar[dl]^{\id\otimes\sigma} &&
\makebox[30ex]{$\quad\Tt\otimes C(S^1)\otimes\Tt^{\otimes(N-1)\!\!\!\!\!\!\!\!\!\!\!\!\!\!\!\!\!\!\!\!\!\!\!\!}$}\ar[dr]_{\sigma\otimes\id} &&
\makebox[10ex]{$C(S^1)\otimes\Tt^{\otimes N}$}\ar[dl]^{\id\otimes\sigma\otimes\id}\\
&\makebox[5ex]{$C(S^1)\otimes C(S^1)$}
\ar@/_1pc/[rrrr]_{\id\otimes 1}
& && & \makebox[10ex]{$C(S^1)\otimes C(S^1)\otimes\Tt^{\otimes(N-1)\quad.}$}&
}
\end{equation}

Using the gauge isomorphisms~\eqref{kappa}  together with some permutations of
tensor factors, we  transform the diagonal action (on the pullback components)
to the action on the  rightmost factor thus obtaining the following diagram:
\begin{equation}\label{diagram2}
\xymatrix@C=0pt{
&C(S^3_H)^R\ar[ld]\ar[rd]\ar@/^1pc/@{-->}[rrrr]^{\eta^R}& && &
\makebox[10ex]{$(C(S^3_H)\otimes\Tt^{\otimes (N-1)})^R$}\ar[ld]\ar[rd]&\\
\Tt\otimes C(S^1)\ar[dr]_{\sigma\otimes \id}
\ar@/^2pc/[rrrr]^{\id\otimes 1\otimes\id}
&&  \Tt\otimes C(S^1)
\ar@/_1pc/[rrrr]_(0.7){\id\otimes 1\otimes\id}
\ar[dl]^{\phi} &&
\Tt^{\otimes N}\otimes C(S^1)\ar[dr]_{\sigma\otimes\id} &&
\Tt^{\otimes N}\otimes C(S^1)\ar[dl]^{\psi}\\
& \makebox[10ex]{$C(S^1)\otimes C(S^1)$}
\ar@/_1pc/[rrrr]_{\id\otimes 1\otimes\id}
& && & \makebox[10ex]{$C(S^1)\otimes\Tt^{\otimes (N-1)}\otimes C(S^1)\quad.$}&
}
\end{equation}
Here the top line is $U(1)$-equivariantly isomorphic to the top line of the previous diagram, and
$\phi$ and $\psi$ are given by
\begin{align*}
\phi:\Tt\otimes C(S^1)&\longrightarrow C(S^1)\otimes C(S^1), \nonumber\\
\phi:t\otimes u&\longmapsto u\sw{1}S(\sigma(t))\otimes u\sw{2}\,,\nonumber\\
\psi:\Tt\otimes\Tt^{\otimes (N-1)}\otimes C(S^1)
&\longrightarrow C(S^1)\otimes \Tt^{\otimes (N-1)}\otimes C(S^1),\nonumber\\
\psi:t\otimes \bar{r}\otimes u&\longmapsto
S(\sigma(t)\bar{r}\sw{1})u\sw{1}\otimes \bar{r}\sw{0}\otimes u\sw{2}\,.
\end{align*}

Finally, to pass to the restriction-corestriction of Diagram~\ref{diagram1}
to the $U(1)$-invariant subalgebras, it suffices to note that it is isomorphic the restriction-corestriction of Diagram~\ref{diagram2},
and that the latter is obtained by removing the rightmost factors from the pullback components:
\begin{equation*}
\xymatrix@C=15pt{
&C(\mathbb{P}^1(\Tt))\ar[ld]\ar[rd]\ar@/^1pc/@{-->}[rrrr]^{\bar{\eta}}& && &
\makebox[10ex]{$\Big(\big(C(S^3_H)\otimes\Tt^{\otimes (N-1)}\big)^R\Big)^{U(1)}\qquad$}\ar[ld]\ar[rd]&\\
\Tt\ar[dr]_{\sigma}
\ar@/^2pc/[rrrr]^{\id\otimes 1}
&&  \Tt
\ar@/_1pc/[rrrr]_(0.7){\id\otimes 1}
\ar[dl]^{S\circ\sigma} &&
\Tt^{\otimes N}\ar[dr]_{\sigma\otimes\id} &&
\Tt^{\otimes N}\ar[dl]^{\tilde{\psi}}\\
&C(S^1)
\ar@/_1pc/[rrrr]_{\id\otimes 1}
& && & C(S^1)\otimes\Tt^{\otimes (N-1)}\quad.&
}
\end{equation*}
Here
\begin{equation*}
\tilde{\psi}:\Tt^{\otimes N}\ni t\otimes \bar{r}\longmapsto
S(\sigma(t)\bar{r}\sw{1})\otimes \bar{r}\sw{0}\in C(S^1)\otimes \Tt^{\otimes (N-1)}.
\end{equation*}
Due to  the naturality of the K\"unneth formula, all
three maps $\id\otimes 1_{\Tt^{\otimes (N-1)}}$ between the pullback components
induce isomorphisms on $K$-groups. Hence, it
follows from \cite[Theorem~3.1]{FHMZ} that also $\bar{\eta}$ induces an
isomorphism on $K$-groups.
\end{proof}

\begin{proof}[Proof of Theorem~\ref{thm:main}(\ref{it:main-stabnontriv})]
Lemma~\ref{SphereIterate} implies that
\[
f:=(\text{pr}_1^2\otimes\id_{\Tt^{\otimes (N-1)}})\circ
(\text{pr}_1^3\otimes\id_{\Tt^{\otimes (N-2)}})\circ\cdots\circ
\text{pr}_1^N
\]
is a surjective $U(1)$-equivariant *-homomorphism
\begin{equation*}
f : C(S^{2N+1}_H)\longrightarrow C(S^3_H)\otimes\Tt^{\otimes (N-1)}\!.
\end{equation*}
Furthermore, by Lemma~\ref{IsomLemKGroups} we have a $U(1)$-equivariant
*-homomorphism
$$
\eta:C(S^3_H)\longrightarrow C(S^3_H)\otimes \Tt^{\otimes (N-1)},
$$
which induces an isomorphism on $K$-groups.

Next, the freeness of the diagonal $U(1)$-action on $C(S^{2N+1}_H)$,
which follows from Section~\ref{3.2} for $\theta=0$, allows us to apply the final statement of
Theorem~\ref{thm:epullback} to  infer that the equality of $K_0$-classes
$[C(S^{2N+1}_H)_m]=[C(S^{2N+1}_H)_n]$ implies
the equality of $K_0$-classes
\begin{multline}\label{f}
\bigl[(C(S^3_H)\otimes \Tt^{\otimes (N-1)})_m\bigr]=\bar{f}_{*}([(S^{2N+1}_H)_m])\\
=\bar{f}_{*}([(S^{2N+1}_H)_n])
=\bigl[(C(S^3_H)\otimes \Tt^{\otimes (N-1)})_n\bigr].
\end{multline}
Here by $\bar{f}$ we denoted the restriction-corestriction of $f$
to $U(1)$-invariant subalgebras. Much in the same way, identifying the isomorphic $C^*$-algebras
$$
\Big(\big(C(S^3_H)\otimes\Tt^{\otimes (N-1)}\big)^R\Big)^{U(1)}\cong
\big(C(S^3_H)\otimes\Tt^{\otimes (N-1)}\big)^{U(1)},
$$
we conclude that
\begin{align}
\bigl[(C(S^3_H)\otimes \Tt^{\otimes (N-1)})_m\bigr]&=\bar\eta_{*}\bigl([C(S^3_H)_m]\bigr),\\
\bigl[(C(S^3_H)\otimes \Tt^{\otimes (N-1)})_n\bigr]&=\bar\eta_{*}\bigl([C(S^3_H)_n]\bigr).\label{eta}
\end{align}
Now, it follows from \eqref{f}--\eqref{eta} and Lemma~\ref{IsomLemKGroups} that
$[C(S^3_H)_m]= [C(S^3_H)_n]$. Finally, by an index-pairing calculation\cite[Theorem~3.3]{hms03}, we obtain $m=n$.
\end{proof}


\begin{center}{\sc Acknowledgements}\end{center}
\noindent
This research was supported by NCN grant 2012/06/M/ST1/00169 and the Australian Research
Council. Ryszard Nest was supported by the Danish National Research Foundation (DNRF) through
the Centre for Symmetry and Deformation at the University of Copenhagen (CPH-SYM-DNRF92).
Part of this work was completed when two of the authors (P.~M.~Hajac and
A.~Sims) were at the University of M\"unster, supported by the Focus Programme on
$C^*$-algebras in July 2015. We thank the organizers for the opportunity. It is a
pleasure to thank Carla Farsi, Elizabeth Gillaspy  and Tomasz Maszczyk for  discussions.

\end{document}